\newtheorem{introques}{Question}
\newtheorem{introdefn}[introques]{Definition}
\newtheorem{introdethm}[introques]{Theorem}
\newtheorem{introdeprop}[introques]{Proposition}
\newtheorem{thm}{Theorem}[subsection]
\newtheorem{defn}[thm]{Definition}
\newtheorem{Ex}[thm]{Example} 
\newtheorem{lemma}[thm]{Lemma} 
\newtheorem{proposition}[thm]{Proposition} 
\newtheorem{remark}[thm]{Remark} 
\newtheorem{Cor}[thm]{Corollary}
\DeclareMathOperator{\GL}{GL}
\DeclareMathOperator{\id}{id}
\DeclareMathOperator{\Hom}{Hom}
\newcommand{\uaut}{\underline{\rm aut}}
\newcommand{\unit}{\mathbb{1}}
\def\ob{\operatorname {ob}}
\def\bdt{\Delta}
\def\ot{\otimes}
\newcommand\replace[1]{}
\DeclareMathOperator\ev{{\operatorname{ev}}}
\DeclareMathOperator\coev{\operatorname{coev}}
\def\namelabel#1#2{\@bsphack
  \protected@write\@auxout{}%
         {\string\newlabel{#1.nme}{{#2}{#2}}}%
  \@esphack}
\def\Hom{{\mbox{\rm Hom}}}
\def\GL{{\mbox{\rm GL}}}
\numberwithin{equation}{section}
\def\1{\mathbb{1}}
\def\ob{\operatorname{ob}}
\newcommand{\kk}{\Bbbk}
\newcommand{\mc}{\mathcal}
\def\ra{\rightarrow}
\def\t{\text}
\def\it{\textit}
\def\vps{\varepsilon}
\numberwithin{equation}{section}
\title[A cogroupoid associated to preregular forms]{A cogroupoid associated to preregular forms}
\author[Huang]{Hongdi Huang}
\address{(Huang) Department of Mathematics, Rice University, Houston, TX 77005, U.S.A.}
\email{h237huan@rice.edu}
\author[Nguyen]{Van C. Nguyen}
\address{(Nguyen) Department of Mathematics, United States Naval Academy, Annapolis, MD 21402, U.S.A.}
\email{vnguyen@usna.edu}
\author[Ure]{Charlotte Ure}
\address{(Ure) Department of Mathematics, University of Virginia, Charlottesville, VA 22904, U.S.A.}
\email{cu9da@virginia.edu}
\author[Vashaw]{Kent B. Vashaw}
\address{(Vashaw) Department of Mathematics,
Massachusetts Institute of Technology,
Cambridge, MA 02139, U.S.A.}
\email{kentv@mit.edu}
\author[Veerapen]{Padmini Veerapen}
\address{(Veerapen) Department of Mathematics, Tennessee Tech University, Cookeville, TN 38505, U.S.A.}
\email{pveerapen@tntech.edu}
\author[Wang]{Xingting Wang}
\address{(Wang) Department of Mathematics, Howard University, Washington, DC 20059, U.S.A.}
\email{xingting.wang@howard.edu}
\date\today
\subjclass{
16T05, 
16W50, 
17B37  
}
\keywords{cogroupoid, superpotential algebra, preregular form, universal quantum group, 2-cocycle twist, Artin--Schelter regular algebra}
\begin{document}
\begin{abstract} 
We construct a family of cogroupoids associated to preregular forms and recover the Morita--Takeuchi equivalence for Artin--Schelter regular algebras of dimension two, observed by Raedschelders and Van den Bergh. Moreover, we study the 2-cocycle twists of pivotal analogues of these cogroupoids, by developing a categorical description of preregularity in any tensor category that has a pivotal structure. 
\end{abstract}

\maketitle  

\section{Introduction}

This paper examines superpotentials associated to Artin--Schelter (AS) regular algebras and their universal quantum groups via the construction of certain bi-Galois objects using the language of cogroupoids. Superpotentials, or their duals, preregular forms, can be associated to any $N$-Koszul AS-regular algebra \cite{Dubois-Violette2005} and play an important role in noncommutative algebra, noncommutative algebraic geometry, and quantum groups, for example, via the classification of algebras \cite{BSW,MSKoszul,Mori-Ueyama2019}. Quantum groups associated to these objects were introduced independently by Dubois-Violette and Launer \cite{DVL1990} and by Wang  \cite{Wang1995}. Later, Bichon and Dubois-Violette \cite{BDV13} gave an explicit presentation of this quantum group by generators and relations. By \cite{Chirvasitu-Walton-Wang2019}, when the superpotential algebra is $N$-Koszul and AS-regular, this quantum group coincides with Manin's universal quantum group (c.f.~\cite{Manin2018}), the Hopf algebra that universally coacts on the underlying algebra. These quantum groups and their generalizations, which consists of a wide class of Hopf algebras, including the coordinate rings $\mc{O}(\GL_n)$ and their quantum analogues $\mc{O}_q(\GL_n)$ (as in \cite{BG2002}), have recently been studied in \cite{Chirvasitu-Walton-Wang2019, WaltonWang2016}.

Schauenburg showed in \cite{Sch1996} that the categories of comodules over two Hopf algebras $H$ and $L$ are monoidally equivalent, called Morita--Takeuchi equivalent, if and only if there exists an $H$-$L$-bi-Galois object between them. Later, Bichon \cite{Bichon2014} introduced the notion of a cogroupoid to provide a categorical context for Hopf-(bi)Galois objects. An understanding of the structure of cogroupoids is useful since it enhances the classical theory by allowing categorical arguments on Hopf-(bi)Galois objects. Other applications of cogroupoids include: explicit construction of new resolutions from old ones in homological algebra, invariant theory, monoidal equivalences between categories of Yetter--Drinfeld modules with applications to bialgebra cohomology and Brauer groups \cite{Bichon2013, Bichon2014}. Here, we construct a cogroupoid whose objects are determined by preregular forms. 

\begin{introdeprop}[\Cref{lem:4.7}, \Cref{lem:antipode}, \Cref{defn:he}]
For any integer $m \geq 2$, there is a cogroupoid $\mathcal{GL}_m$, whose objects are given by all $m$-preregular forms. In particular, for any $m$-preregular form $f$, the Hopf algebra $\mathcal{GL}_m(f,f)$ is the universal quantum group associated to $f$, as given in \cite{Chirvasitu-Walton-Wang2019}.
\end{introdeprop}

Since Dubois-Violette showed that any $N$-Koszul AS-regular algebra is a superpotential algebra associated to some preregular form \cite{Dubois-Violette2005}, our construction provides an explicit way to establish the Morita--Takeuchi equivalence between Manin's universal quantum groups associated to $N$-Koszul AS-regular algebras. In particular, we recover a special case of a result of Raedschelders and Van den Bergh from \cite{vdb2017} stating the Morita--Takeuchi equivalence between Manin's universal quantum groups associated to AS-regular algebras of the same dimension. Our method does not depend on the categorical approach of the Tannaka--Krein formalism, but relies instead on the non-vanishing of certain bi-Galois objects between any $N$-Koszul AS-regular algebras.

\begin{introdethm}[\Cref{prop:AS2}]
Manin's universal quantum groups associated to any two AS-regular algebras of dimension two are Morita--Takeuchi equivalent.
\end{introdethm}

Moreover, we study $\mathcal{SL}_m$-type universal quantum groups under $2$-cocycle twisting. As a necessary condition, we introduce the notion of a preregular morphism, a generalization of a preregular form, in any rigid tensor category that has a pivotal structure. Pivotal (also called sovereign) categories have been important in topological quantum field  theory. The pivotal structure allows the definition of quantum dimension, which can be used to produce numerical invariants of 3-manifolds and knots \cite{Hennings1996,KR1995}. When the (co)representation category of a Hopf algebra is pivotal, the Hopf algebra is called (co)pivotal (or (co)sovereign, as in \cite{Bichon2001}). In our paper, we employ the pivotal structure to define a $\Hom$-space operator $D_V^m$ which resembles the cyclic permutation of tensor products of vector spaces $V^{\otimes m}$. This enables us to define the notion of preregulariy on morphisms in a categorical context.

\begin{introdefn}[\Cref{defn:preop}]
Let $\mathcal C$ be a pivotal tensor category. For any integer $m\ge 2$ and $V\in \ob(\mc{C})$, a morphism $f: V^{\otimes m}\to \unit$ is called \emph{preregular} if 
\begin{itemize}
    \item[(1)] $f$ is \emph{non-degenerate}, namely there is a surjection $\pi: V^{\otimes (m-1)}\twoheadrightarrow \!^*V$ for the right dual object $\!^*V$ of $V$ in $\mc{C}$ such that the following diagram 
    \[
    \xymatrix{
    V^{\otimes m}\ar[rr]^-{f} \ar@{=}[d] && \unit\\
    V\otimes V^{\otimes (m-1)}\ar[rr]_-{\id_{V}\otimes \pi} && V\otimes\, \!^*V\ar[u]_-{\ev}
    }
    \]
     commutes, and 
    \item[(2)] $D_V^m(f)=f$, where the operator $D_V^m: {\rm Hom}_\mathcal C(V^{\otimes m}, \unit)\to {\rm Hom}_\mathcal C(V^{\otimes m}, \unit)$ is defined in \eqref{eq:DV}.
\end{itemize}
\end{introdefn}

We observe that the inverse duals of these $\Hom$-space operators $D_V^m$ are exactly those $E_{V^*}^m$, where $V^*$ denotes the dual of $V$, used to define generalized Frobenius--Schur indicators in an arbitrary pivotal category \cite{Ng-Schauenburg, NgSchauenburg2010}. As a consequence, any preregular morphism is an eigenvector of the above operator $D_V^m$ satisfying some nondegeneracy conditions. Moreover, the dual of a preregular morphism, which we refer to as a twisted superpotential, is invariant under the operator $E_{V^*}^m$. We use this generalization to construct another cogroupoid associated to preregular forms.

\begin{introdeprop}[\Cref{def:SL}, \Cref{lemma:univ-sl}]
For any integer $m \geq 2$, there is a cogroupoid $\mathcal{SL}_m$, whose objects are given by all $m$-preregular forms. In particular, for any $m$-preregular form $f$, the Hopf algebra 
\[\mathcal{SL}_m(f,f) = \mathcal{GL}_m(f,f)/(D-1) \] 
is the universal copivotal Hopf algebra associated to $f$, as given in \cite{Bichon2001,BDV13}. Here, $D$ is the quantum determinant of $\mathcal{GL}_m(f,f)$.
\end{introdeprop}

Using the cogroupoid $\mathcal{SL}_m$, we obtain the following result on the 2-cocycle twists of the universal quantum groups of preregular forms considered in \cite{BDV13}, which are copivotal Hopf algebras. 

\begin{introdethm}[\Cref{thm:SLE}]
Let $m \ge 2$ be an integer and $V$ be a finite-dimensional $\kk$-vector space. Let $f$ be an $m$-preregular form on $V$ and $\sigma$ be a left 2-cocycle on $\mathcal{SL}_m(f,f)$. Then the twisted map $f_\sigma$ (see \Cref{DefPreregularForm}) is also an $m$-preregular form on $V$ and the universal quantum groups
\[
\mathcal{SL}_m(f_\sigma,f_\sigma)~\cong~\mathcal{SL}_m(f,f)^\sigma\] 
are isomorphic as Hopf algebras. 
\end{introdethm}

\subsection*{Acknowledgements} 
The authors thank Chelsea Walton and Ken Goodearl for useful discussions. Some results in this paper were formulated at the Structured Quartet Research Ensembles (SQuaREs) program in March 2022, and at the BIRS Workshop on Noncommutative Geometry and Noncommutative Invariant Theory in September 2022. The authors thank the American Institute of Mathematics, the Banff International Research Station, and the organizers of the BIRS Workshop for their hospitality and support. Nguyen was partially supported by the Naval Academy Research Council and NSF grant DMS-2201146. Ure was partially supported by an AMS--Simons Travel Grant. Vashaw was partially supported by an Arthur K. Barton Superior Graduate Student Scholarship in Mathematics from Louisiana State University, NSF grants DMS-1901830 and DMS-2131243, and NSF Postdoctoral Fellowship DMS-2103272. Wang was partially supported by Simons Collaboration grant \#688403 and AFOSR grant FA9550-22-1-0272.

\section{Preliminaries}
\label{sec:superpotential}
Throughout the paper, let $\kk$ be a base field with $\otimes$ taken over $\kk$ unless stated otherwise. All categories are $\kk$-linear and all algebras are associative over $\kk$. We use the Sweedler's notation for the coproduct in a coalgebra $B$: $\Delta(h) = \sum h_1 \otimes h_2$ for any $h \in B$. When a Hopf algbera $H$ (right) coacts on an algebra $A$, we denote the coaction $\rho: A \to A \otimes H$ by $a \mapsto \sum a_0 \otimes a_1$. The category of all (resp.~ finite-dimensional) right $B$-comodules is denoted by ${\rm comod}(B)$ (resp.~ ${\rm comod}_{\rm fd}(B))$.

In this section, we present some background on superpotential algebras associated to preregular forms, cogroupoids, and 2-cocycle twists. In \cite[Theorem 4.3]{Dubois-Violette2005}, Dubois-Violette proved that every $N$-Koszul AS-regular algebra of finite global dimension $d$, generated by $n$ elements in degree one, is a twisted superpotential algebra. In this paper, AS-regular algebras will refer to Gorenstein algebras with finite global dimension; one may view them as ``nice" noncommutative analogues of polynomial rings. Note that we do not require AS-regular algebras to have finite Gelfand--Kirillov dimension.

\subsection{Superpotential algebras and Manin's universal quantum groups} 

We use the definitions from \cite{Dubois-Violette2005} of an algebra associated to a preregular form. 

\begin{defn}
\label{defn:preregular}
Let $2 \leq N \leq m$ be integers and $V$ be an $n$-dimensional $\kk$-vector space.
\begin{enumerate}
    \item An $m$-linear form $f$ on $V$ is called \emph{preregular} if it satisfies the following conditions:
    \begin{enumerate} 
        \item $f \left(v_1, v_2, \ldots, v_m \right) = 0$ for any $v_2, \ldots, v_m \in V$ implies that $v_1=0$, and
        \item there is some $\mathbb{P} \in \GL(V)$ so that 
        \[f\left( v_1, \ldots, v_m \right) = f\left( \mathbb{P} (v_m), v_1, \ldots, v_{m-1}\right), \text{ for all }v_1, \ldots, v_m \in V.\]
    \end{enumerate} 
    Given a preregular form $f$ on a vector space $V$ with fixed basis $\{v_1, \ldots, v_n\}$, we will typically denote by $f_{i_1 \cdots i_m} = f \left( v_{i_1}, \ldots, v_{i_m}\right)$, for any $1 \leq i_1,\ldots,i_m \leq n$.
    \item Let $f$ be an $m$-preregular form on $V$, and $\{v_1, \ldots, v_n\}$ be a fixed basis of $V$. The \emph{superpotential algebra associated to $f$}, denoted by $A(f,N)$, is the $\kk$-algebra generated by $n$ generators $x_1,\ldots,x_n$ subject to the relations 
    \[\sum_{1\leq j_1,\ldots,j_N\leq n} f_{i_1 \cdots i_{m-N} j_{1} \ldots j_N}\, x_{j_1}\cdots x_{j_N} =0 \]
    for every possible $1\leq i_1,\ldots,i_{m-N}\leq n$. 
\end{enumerate}    
\end{defn} 
For any $m$-preregular form $f$ on $V$, it is straightforward to check that there is an associated $\mathbb P\in \GL_n(\kk)=\GL(V)$ satisfying 
\begin{align}
\label{eq:preregular}
\sum_{1\leq i\leq n} \mathbb P_{ii_m}f_{ii_1\cdots i_{m-1}}=f_{i_1\cdots i_m} \qquad \text{and} \qquad \sum_{1\leq i\leq n} (\mathbb P^{-1})_{ii_1}f_{i_2\cdots i_{m}i}=f_{i_1\cdots i_m}
\end{align}
for every possible $1\leq i_1,\ldots,i_m\leq n$. Henceforth, an algebra $A$ will be called a \emph{superpotential algebra} if there are some choice of integers $m$ and $N$ with $2 \leq N \leq m$, and an $m$-preregular form $f$ so that $A \cong A(f,N)$. Here, any superpotential algebra can be considered as a graded algebra by assigning degree 1 to its generators.
   
\begin{remark}
\label{defn:superpotential}
Using the notation from \Cref{defn:preregular}, let $c: V^{\otimes m} \rightarrow V^{\otimes m}$ be the linear map defined by 
\[c\left( v_1 \otimes \cdots \otimes v_m \right) := v_m \otimes v_1 \otimes \cdots \otimes v_{m-1}, \quad \text{for any $v_i \in V$}.\]
An element $s \in V^{\otimes m}$ is a \emph{twisted superpotential} if there is $\mathbb{P} \in \GL(V)$ so that 
\[\left( \mathbb{P} \otimes \id^{\otimes(m-1)} \right) c(s) = s.\]
Given a twisted superpotential $s \in V^{\otimes m}$, the \emph{superpotential algebra} associated to $s$ is defined as
\[A(s,N) := TV / \partial^{m-N} (\Bbbk s),\]
where $TV$ is the tensor algebra on $V$,
\begin{align*} 
\partial  (\Bbbk s) &= \left\{ \left( \nu \otimes \id^{\otimes (m-1)}\right) (\alpha s) \mid \nu \in V^*, \alpha \in \Bbbk \right\}, 
\text{ and } \\
\partial^{i+1} (\Bbbk s) &= \partial \left( \partial^i ( \Bbbk s)\right), \quad \text{for all }i \geq 0. \end{align*} 
By identifying $V^{\otimes m} \cong \left( ( V^*)^{\otimes m} \right)^*$, where $(-)^*$ denotes the $\kk$-dual, there is a one-to-one correspondence between $m$-preregular forms on $V^*$ and twisted superpotentials in $V^{\otimes m}$: 
 \[ {{\left\{ 
 \text{$m$-preregular forms on } V^* \right\} 
\overset{1:1}{\longleftrightarrow} \left\{ \text{twisted superpotentials in } V^{\otimes m} \right\}}}. \]
Furthermore, the associated algebras $A(f,N)$ and $A(s,N)$ are isomorphic for $f$ associated to $s$ under the above correspondence \cite[Lemma 2.4]{Chirvasitu-Walton-Wang2019}. 
\end{remark} 

Next, we review Manin's construction of the universal quantum group $\underline{\rm aut}(A)$ associated to any superpotential algebra $A=A(f,N)$ as described in \cite{Manin2018}. Note that the original definition was only given under the assumption that $A$ is a quadratic algebra, but it can be generalized to any graded algebra. 

\begin{defn}
Let $A$ be a $\mathbb Z$-graded algebra. 
\begin{enumerate}
    \item We say a Hopf algebra $H$ \emph{left coacts on $A$ preserving the grading of $A$} via $\rho: A\to  H\otimes A$ if each homogeneous component of $A$ is a left $H$-comodule via $\rho$ and $\rho$ is an algebra map. In this case, we say $A$ is a left graded comodule algebra over $H$.
    \item \emph{Manin's left universal quantum group $\underline{\rm aut}^l(A)$ associated to $A$} is the Hopf algebra that left coacts on $A$ preserving the grading of $A$ via $\rho: A\to \underline{\rm aut}^l(A)\otimes A$ satisfying the following universal property: If $H$ is any Hopf algebra that left coacts on $A$ preserving the grading of $A$ via $\tau: A\to H\otimes A$, then there is a unique Hopf algebra map $f: \underline{\rm aut}^l (A)\to H$ such that the diagram 
\begin{align}\label{def:aut}
\xymatrix{
A\ar[r]^-{\rho}\ar[dr]_-{\tau} &  \underline{\rm aut}^l(A)\otimes A\ar[d]^-{f\otimes \id} \\
& H\otimes A
}
\end{align}
commutes. Similarly, we can define $\underline{\rm aut}^r(A)$ by using the universal right coaction on $A$ preserving the grading of $A$.
\end{enumerate}
\end{defn}

By \cite[Example 4.8(1)-(2)]{AGV}, we know $\underline{\rm aut}^l(A)$ always exists if $A$ is locally finite, namely, when $\dim_\kk A_i<\infty$ for all $i\in \mathbb Z$. In particular, when $A=A(f,N)$ is a superpotential algebra, $\underline{\rm aut}^l(A)$ and $\underline{\rm aut}^r(A)$ always exist.

\begin{Ex}\cite[\S 3.2]{RVdB2017}\label{E:GL2}
Manin's left universal quantum group $\underline{{\rm aut}}^l(A)$ associated to the polynomial algebra $A=\kk[x,y]$ is generated by the entries of the $2 \times 2$ matrix 
\[M=\begin{pmatrix} a& b\\ c&d \\ \end{pmatrix}\]
together with the formal inverse of the determinant $\delta=ad-cb$, subject to the following relations:
\begin{align*}
ac -ca~=\,&0=~bd -db,\\
a\delta^{-1}d-b\delta^{-1}c ~=\,&1=~ d\delta^{-1}a- c\delta^{-1}b, \text{ and } \\ b\delta^{-1}a-a\delta^{-1}b~=\,&0 =~ c\delta^{-1}d-d\delta^{-1}c.
\end{align*}
The bialgebra structure of $\underline{{\rm aut}}^l(A)$ is given by $\Delta(M)=M\otimes M$ and $\varepsilon(M)=\begin{pmatrix} 1& 0\\ 0&1 \\ \end{pmatrix}$. The antipode is determined by $S(M)=\begin{pmatrix} \delta^{-1}a& -\delta^{-1}b\\ -\delta^{-1}c&\delta^{-1}d\end{pmatrix}$. As consequence, $\delta$ is a group-like element in $\underline{{\rm aut}}^l(A)$. 
\end{Ex}

\subsection{Cogroupoids}

We now discuss Morita--Takeuchi equivalence in the context of the universal quantum groups associated to preregular forms using the language of cogroupoids introduced by Bichon \cite{Bichon2014}. These provide a categorical framework for bi-Galois objects discussed by Schauenburg \cite{Sch1996}.
\begin{defn}
\label{defn:cocategory}
A \emph{$\kk$-cocategory} $\mc{C}$ consists of:
\begin{enumerate}
\item A set of objects $\ob(\mc{C})$;
\item For any $X,Y\in \ob(\mc{C})$, a $\kk$-algebra $\mc{C}(X,Y)$;
\item For any $X,Y,Z\in \ob(\mc{C})$, $\kk$-algebra homomorphisms
\[ \bdt^Z_{XY}:\mc{C}(X,Y)\ra \mc{C}(X,Z)\ot \mc{C}(Z,Y) \qquad \t{and} \qquad \vps_X:\mc{C}(X,X)\ra \kk \]
such that for any $X,Y,Z,T\in \ob(\mc{C})$, the diagrams
\[
\xymatrix{
\mc{C}(X,Y)\ar^-{\bdt^Z_{X,Y}}[rr]\ar^{\bdt^T_{X,Y}}[d] && \mc{C}(X,Z)\ot \mc{C}(Z,Y)\ar^-{\bdt^T_{X,Z}}[d]\\
\mc{C}(X,T)\ot \mc{C}(T,Y)\ar^-{\id\ot \bdt^Z_{T,Y}}[rr]&&\mc{C}(X,T)\ot \mc{C}(T,Z)\ot \mc{C}(Z,Y), }
\]
\[\xymatrix
{\mc{C}(X,Y)\ar@{=}[rd]\ar^{\bdt^Y_{X,Y}}[d]&\\
\mc{C}(X,Y)\ot\mc{C}(Y,Y)\ar^-{\id \ot \vps_Y}[r]&\mc{C}(X,Y),} \qquad 
\xymatrix
{\mc{C}(X,Y)\ar@{=}[rd]\ar^{\bdt^X_{X,Y}}[d]&\\
\mc{C}(X,X)\ot\mc{C}(X,Y)\ar^-{\vps_X \ot \id }[r]&\mc{C}(X,Y)}
\]
commute.
\end{enumerate}
\end{defn}

For $a^{X,Y}\in \mc{C}(X,Y)$, we use Sweedler's notation to write 
\[ \bdt^Z_{X,Y}(a^{X,Y})=\sum a^{X,Z}_1\ot a^{Z,Y}_2. \]
From its definition, a cocategory with one object is just a bialgebra. In particular, $\mc{C}(X, X)$ is a bialgebra for any $X \in \ob(\mc{C})$. A cocategory $\mc{C}$ is said to be \emph{connected} if $\mc{C}(X,Y)$ is a nonzero algebra for any $X,Y\in \ob(\mc{C})$.

\begin{defn}
\label{defn:cogroupoid}
A \emph{$\kk$-cogroupoid} $\mc{C}$ consists of a $\kk$-cocategory $\mc{C}$ together with linear maps
\[ S_{X,Y}:\mc{C}(X,Y)\longrightarrow \mc{C}(Y,X), \]
for any $X,Y\in \ob(\mc{C})$, such that the diagram 
{\small \[\xymatrix{\mc{C}(Y,X) & \kk \ar[l]_-u & \mc{C}(X,X)\ar[d]_{\bdt_{X,X}^Y}\ar[r]^-{\vps_X}\ar[l]_-{\varepsilon_X} &\kk\ar[r]^-u&\mc{C}(X,Y)\\
\mc{C}(Y,X)\ot\mc{C}(Y,X) \ar[u]^m &&\mc{C}(X,Y)\ot\mc{C}(Y,X)\ar[rr]^{\id\ot S_{Y,X}} \ar[ll]_-{S_{X,Y} \otimes \id} &&\mc{C}(X,Y)\ot\mc{C}(X,Y) \ar[u]^m} \]}
commutes. 
\end{defn}

The following proposition describes properties of the ``antipodes" in cogroupoids. For other properties of cogroupoids, we refer the reader to \cite{Bichon2014}. In a cogroupoid $\mc{C}$, the bialgebra $\mc{C}(X,X)$ is a Hopf algebra for any $X\in \ob(\mc{C})$, with the antipode map $S_{X,X}$ described here. 
\begin{proposition}\cite[Proposition 2.13]{Bichon2014}
Let $\mc{C}$ be a cogroupoid and $X,Y\in\ob(\mc{C})$. Then the following hold. 
\begin{enumerate}
\item[(1)] $S_{Y,X}:\mc{C}(Y,X)\ra \mc{C}(X,Y)^{\operatorname{op}}$ is an algebra homomorphism.
\item[(2)] For any $Z\in \ob(\mc{C})$ and $a^{Y,X}\in\mc{C}(Y,X)$,
\[ \bdt_{X,Y}^Z(S_{Y,X}(a^{Y,X}))=\sum S_{Z,X}(a_2^{Z,X})\ot S_{Y,Z}(a_1^{Y,Z}). \]
\end{enumerate}
\end{proposition}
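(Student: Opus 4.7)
The proposition is the cogroupoid analog of the classical Hopf-algebra facts that the antipode is both an anti-algebra and an anti-coalgebra homomorphism, and my plan is to adapt the standard convolution-inverse argument. The three ingredients available from the axioms are: each structure map $\bdt^Z_{X,Y}$ is an algebra homomorphism, the generalized coassociativity axiom relating different $\bdt^Z_{X,Y}$, and the antipode axioms $\sum S_{X,Y}(x_1^{X,Y})\,x_2^{Y,X}=\vps_X(x)\cdot 1$ and $\sum x_1^{X,Y}\,S_{Y,X}(x_2^{Y,X})=\vps_X(x)\cdot 1$ for $x\in\mc{C}(X,X)$.

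For part (i), I would introduce the two maps $\mu,\nu:\mc{C}(Y,X)\otimes\mc{C}(Y,X)\to\mc{C}(X,Y)$ given by $\mu(a\otimes b)=S_{Y,X}(ab)$ and $\nu(a\otimes b)=S_{Y,X}(b)\,S_{Y,X}(a)$, and show that $\mu=\nu$. Expanding $\bdt^X_{Y,X}(ab)=\sum a_1^{Y,X}b_1^{Y,X}\otimes a_2^{X,X}b_2^{X,X}$ using that $\bdt^X_{Y,X}$ is an algebra map, I would apply the antipode axiom to the $\mc{C}(X,X)$-component together with the counital/unital axioms to verify that $\mu$ and $\nu$ are respectively a left and a right convolution inverse of the multiplication $m:\mc{C}(Y,X)^{\otimes 2}\to\mc{C}(Y,X)$ in the appropriate generalized convolution setup. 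Uniqueness of two-sided inverses then forces $\mu=\nu$.

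For part (ii), I would run the dual argument: both $\bdt^Z_{X,Y}\circ S_{Y,X}$ and $a\mapsto\sum S_{Z,X}(a_2^{Z,X})\otimes S_{Y,Z}(a_1^{Y,Z})$ are maps $\mc{C}(Y,X)\to\mc{C}(X,Z)\otimes\mc{C}(Z,Y)$, and I would show that each, when combined with $\bdt^Z_{Y,X}$ via the generalized coassociativity diagram, collapses to a counit-times-unit after applying the antipode axioms inside $\mc{C}(X,X)$ and $\mc{C}(Y,Y)$. Both candidates thus satisfy the same characterizing convolution identity, so they coincide.

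The main obstacle is the bookkeeping. In the classical Hopf-algebra setting every relevant map sits inside a single convolution algebra $\operatorname{Hom}(H^{\otimes n},H)$, whereas here the various domains and codomains range over the different algebras $\mc{C}(A,B)$ as the objects vary, and the would-be convolution product has to be set up by pairing maps with different sources via the cogroupoid coproducts $\bdt^Z_{X,Y}$. Once the correct pairing is identified and the convolution unit is recognized as a counit-times-unit, the computations reduce to routine translations of the classical Hopf-algebra arguments.
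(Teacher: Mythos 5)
This proposition is stated in the paper without proof: it is quoted from Bichon with the citation to \cite{Bichon2014}, and the surrounding text explicitly refers the reader there. So there is no in-paper argument to compare against; the benchmark is the standard proof in the cited source, which is the classical ``antipode is an anti-(co)algebra map'' argument written out as a direct Sweedler-notation computation in the cogroupoid. Your three ingredients --- multiplicativity of the maps $\Delta^Z_{X,Y}$ and $\varepsilon_X$, the generalized coassociativity, and the two antipode axioms for elements of $\mc{C}(X,X)$ --- are exactly the ones that proof uses, so your plan starts in the right place.

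The gap is the step ``uniqueness of two-sided inverses then forces $\mu=\nu$.'' In a Hopf algebra the three maps $S\circ m$, $m$, and $m^{op}\circ(S\otimes S)$ all live in the single convolution algebra $\operatorname{Hom}(H^{\otimes 2},H)$, and it is the associativity of that convolution that upgrades ``$\mu$ is a left inverse and $\nu$ is a right inverse of $m$'' to $\mu=\nu$. Here $\mu$ and $\nu$ take values in $\mc{C}(X,Y)$ while $m$ takes values in $\mc{C}(Y,X)$, and the only comultiplication-type maps on the domain are $\Delta^Z_{Y,X}:\mc{C}(Y,X)\to\mc{C}(Y,Z)\otimes\mc{C}(Z,X)$, which change the source and target objects rather than duplicating $\mc{C}(Y,X)$. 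Consequently there is no convolution monoid containing $\mu$, $m$, and $\nu$, no two-sided unit, and no associativity law to quote: the identity $\mu=\mu*(m*\nu)=(\mu*m)*\nu=\nu$ does not parse. This is not deferrable bookkeeping --- it is the entire content of the adaptation --- and the resolution is not to repair the convolution formalism but to unfold it into an explicit chain of equalities. For (i), with $a,b\in\mc{C}(Y,X)$,
\begin{align*}
S_{Y,X}(b)\,S_{Y,X}(a)
&=\sum S_{Y,X}\bigl(b_1^{Y,X}\bigr)\,S_{Y,X}\bigl(a_1^{Y,X}\bigr)\,\varepsilon_X\bigl(a_2^{X,X}b_2^{X,X}\bigr)\\
&=\sum S_{Y,X}\bigl(b_1^{Y,X}\bigr)\,S_{Y,X}\bigl(a_1^{Y,X}\bigr)\,a_2^{X,Y}b_2^{X,Y}\,S_{Y,X}\bigl(a_3^{Y,X}b_3^{Y,X}\bigr)\\
&=\sum S_{Y,X}\bigl(b_1^{Y,X}\bigr)\,b_2^{X,Y}\,S_{Y,X}\bigl(a\,b_3^{Y,X}\bigr)
=S_{Y,X}(ab),
\end{align*}
where the first equality uses the counit axiom for $\Delta^X_{Y,X}$ and that $\varepsilon_X$ is an algebra map; the second uses the antipode axiom applied to $a_2^{X,X}b_2^{X,X}\in\mc{C}(X,X)$, multiplicativity of $\Delta^Y_{X,X}$, and the coassociativity identification of $(\id\otimes\Delta^Y_{X,X})\circ\Delta^X_{Y,X}$ with $(\Delta^X_{Y,Y}\otimes\id)\circ\Delta^Y_{Y,X}$; and the last two use $\sum S_{Y,X}(c_1^{Y,X})\,c_2^{X,Y}=\varepsilon_Y(c)1$ for $c\in\mc{C}(Y,Y)$ together with the counit axiom. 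Part (ii) is proved by the same kind of unfolded computation (show that the right-hand side, convolved against $\Delta^Z_{X,Y}\circ S_{Y,X}$ through the appropriate $\Delta$'s, collapses to $\varepsilon\cdot 1\otimes 1$, written out element by element). With your uniqueness-of-inverses step replaced by such explicit chains, the argument is complete and coincides with the one in the cited reference.
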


The following is Bichon's reformulation of Schauenburg's result \cite{Sch1996} about bi-Galois objects for Morita--Takeuchi equivalences in terms of cogroupoids. 
\begin{thm}\cite[Theorem 2.10]{Bichon2014}
Let $H$ and $L$ be Hopf algebras. The following assertions are equivalent.
 \begin{itemize}
     \item[(1)] There exists a $\kk$-linear equivalence of monoidal categories ${\rm comod}(H)\stackrel{\otimes }{\cong} {\rm comod}(L)$;
     \item[(2)] There exists a connected cogroupoid $\mathcal C$ and two objects $X,Y \in \ob(\mc{C})$ such that $H=\mathcal C(X, X)$ and $L=\mathcal C(Y, Y )$.
 \end{itemize}
\end{thm}

\subsection{Pivotal tensor categories}
\label{subsec:pivotal} 

We now recall some concepts and notation from the theory of tensor categories. We use $(\mathcal C,\,\otimes,\,\Phi,\, \mathbb{1},\,r,\,\ell)$ to denote a $\kk$-linear tensor category, with a bifunctor $\otimes: \mathcal C\times \mathcal C\to \mathcal C$ called the \emph{tensor product}, a natural isomorphism $\Phi_{X,Y,Z}: (X\otimes Y)\otimes Z\xrightarrow{\sim} X\otimes(Y\otimes Z)$ called the \emph{associativity constraint}, a \emph{unit object} $\mathbb 1$ together with natural isomorphisms $r_X: X\otimes \mathbb 1\xrightarrow{\sim} X$ and $\ell_X: \mathbb 1\otimes X \xrightarrow{\sim} X$ called the \emph{right and left unit constraints} for all $X,Y,Z \in \ob(\mc{C})$. These structure maps are subject to the Pentagon and Triangle axioms \cite[XI.2.1]{KasGTM}. If two objects $X,Y\in \ob(\mc{C})$ are obtained by tensoring together the same sequence of objects with two different arrangements of parentheses, one can then construct a natural isomorphism between them by composing several instances of the tensor products of $\Phi,\Phi^{-1}$ and the identity. Any above isomorphism is unique by Mac Lane's coherence theorem \cite{MacLane63}, and will be denoted by $\Phi^{?}: X\to Y$. By \cite{Schau01}, we may always assume that the unit object $\mathbb{1}$ of $\mathcal C$ is strict, namely $X\otimes \mathbb{1}=X=\mathbb{1}\otimes X$ and the left and right unit constraints $\ell_{\scriptscriptstyle X}$ and $r_{\scriptscriptstyle X}$ are just the identity maps for every $X \in \ob(\mc{C})$. 

Let $\mathcal C$ and $\mathcal D$ be two tensor categories. Any \emph{tensor functor} $(\mathcal F,\xi): \mathcal C\to \mathcal D$ consists of a functor $\mathcal F:\mathcal C\to \mathcal D$ and a natural isomorphism $\xi_{\scriptscriptstyle X,Y}: \mathcal F(X)\otimes \mathcal F(Y)\xrightarrow{\sim} \mathcal F(X\otimes Y)$ for any $X,Y\in \ob(\mc{C})$ satisfying the monoidal structure axiom (see e.g., \cite[Definition 2.4.1]{EGNO} and \cite[XI.4.1]{KasGTM}). Along with the strictness of $\mathbb 1$, we also assume that $\mathcal F(\mathbb {1})=\mathbb{1}$ with $\xi_{\mathbb 1,X}=\xi_{X,\mathbb 1}={\rm id}_{\mathcal F(X)}$ \cite[Remark 2.4.6]{EGNO}. 

A \emph{left dual} of an object $V\in \ob(\mc{C})$ is an object $V^*$ together with two morphisms $\ev: V^*\otimes V\to \mathbb{1}$ and $\coev: \mathbb{1}\to V\otimes V^*$ such that 
\[
\id_V=(V \xrightarrow{\coev \otimes \id_V} (V \otimes V^*) \otimes V\xrightarrow{\Phi} V \otimes (V^* \otimes V) \xrightarrow{\id_V \otimes \ev} V)
\]
and
\[
\id_{V^*} = (V^* \xrightarrow{\id_{V^*} \otimes \coev} V^* \otimes (V \otimes V^*) \xrightarrow{\Phi} (V^* \otimes V) \otimes V^* \xrightarrow{\ev \otimes \id_{V^*}} V^*).
\]
We say $\mathcal C$ is \emph{left rigid} if every object of $\mathcal C$ admits a left dual. A right dual of an object and right rigidity can be defined similarly for $\mathcal C$. Suppose $\mathcal C$ is left rigid. Then $(-)^*$ is a contravariant monoidal functor together with a monoidal structure $\zeta: Y^*\otimes X^*\xrightarrow{\sim} (X\otimes Y)^*$. As a consequence, the double left dual $(-)^{**}: \mathcal C\to \mathcal C$ is a monoidal functor. 

\begin{defn}\label{defn:pivotal-cat}
A $\kk$-linear left rigid tensor category $(\mathcal C,\,\otimes,\,\Phi,\, \mathbb{1},\,r,\,\ell)$ is called \emph{pivotal} if there is a natural isomorphism $j: {\rm id}_\mathcal C\to (-)^{**}$ of monoidal functors. In this case, $j$ is called a \emph{pivotal structure of $\mathcal{C}$}.
\end{defn}

If $\mathcal C$ and $\mathcal D$ are two pivotal categories, and $(\mathcal F,\xi): \mathcal C\to \mathcal D$ is a monoidal functor, we say $\mathcal F$ \emph{preserves the pivotal structure}, if the diagram
\begin{equation}\label{equ:piv}
\begin{tikzcd}[column sep=normal, row sep=normal]
\mathcal F(V)\arrow[r,"\mathcal F(j)"]\arrow[d,"j_{\mathcal F(V)}"']&\mathcal F(V^{**})\arrow[d,"\widetilde{\xi}"]\\
\mathcal F(V)^{**}\arrow[r,"\widetilde{\xi}^*"] & \mathcal F(V^*)^*
\end{tikzcd}
\end{equation}
commutes, where the natural isomorphism $\widetilde{\xi}_V: \mathcal F(V^*)\to \mathcal F(V)^*$, called the \emph{duality transformation}, is uniquely determined by $(\mathcal F,\xi)$ (see \cite[\S 1]{ Ng-Schauenburg}). A \emph{strict pivotal category} is a strict monoidal category with a pivotal structure in which both the monoidal functor structure $\xi$ of $(-)^*$ and the pivotal structure $j$ are identities. By \cite[Theorem 2.2]{Ng-Schauenburg}, every pivotal category is equivalent, as a pivotal category, to a strict one.

\subsection{2-cocycle twists} 

Schauenburg proved in \cite{Sch1996} that Morita--Takeuchi equivalences for Hopf algebras are in bijection with bi-Galois objects. A subset of these equivalences correspond to 2-cocycle twists, which were introduced by Doi and Takeuchi \cite{Doi93,DT94} (in fact, when the Hopf algebra is finite-dimensional, every Morita--Takeuchi equivalence arises from a 2-cocycle). In this section, we give a brief overview of 2-cocycle twists, and in \Cref{sec:4} we discuss 2-cocycle twists of preregular forms and their associated universal pivotal cogroupoids.

\begin{defn} Let $H$ be a Hopf algebra over a field $\kk$.  
A \emph{2-cocycle} on $H$ is a convolution invertible linear map $\sigma: H\otimes H\to \kk$ satisfying 
\begin{equation}\label{eq:2cocycle}
\begin{aligned}
   \sum \sigma(x_1, y_1)\,\sigma (x_2y_2, z)&=\sum \sigma(y_1, z_1)\,\sigma(x, y_2z_2), \quad \text{and}\\
   \sigma(x,1)&=\sigma(1,x)=\varepsilon(x),
\end{aligned}
\end{equation} 
for all $x,y,z\in H$. The convolution inverse of $\sigma$ is usually denoted by $\sigma^{-1}$. 
\end{defn}

Given a 2-cocycle $\sigma: H\otimes H\to\kk$, let $H^\sigma$ denote the coalgebra $H$ endowed with the original unit and deformed product
\[
x*_\sigma y:=\sum \sigma(x_1,y_1)\,x_2y_2\,\sigma^{-1}(x_3,y_3)
\]
for any $x,y\in H$. In fact, $H^\sigma$ is a Hopf algebra with the deformed antipode $S^\sigma$ given in \cite[Theorem 1.6]{Doi93}. We call $H^\sigma$ the \emph{2-cocycle twist} of $H$ by $\sigma$. It is well-known that two Hopf algebras are 2-cocycle twists of each other if and only if there exists a bicleft object between them (e.g., see \cite{Sch1996}). 

Now, suppose $\sigma: H \otimes H\to \kk$ is a left 2-cocycle on $H$. It is well-known that there is a monoidal equivalence between the category of comodules of $H$ and that of $H^\sigma$ given by
\begin{align}
\label{eq:equiv2}
  (F,\xi) : \mathrm{comod}(H) \overset{\sim}{\rightarrow} \mathrm{comod}(H^\sigma),  
\end{align}
where $F$ is identity as functor on objects together with the monoidal functor structure 
\begin{align}\label{def: MTF}
\xi_{U,V}: F(U\otimes V)&\longrightarrow F(U)\otimes_\sigma F(V)\\
u\otimes v&\longmapsto \sum \sigma(u_1,v_1)\,u_0\otimes v_0, \notag
\end{align}
for any $u\in U$ and $v\in V$, with inverse $\xi_{U,V}^{-1}: u\otimes v\mapsto \sum \sigma^{-1}(u_1,v_1)u_0\otimes v_0$. Moreover, the set of all left 2-cocycles on a Hopf algebra $H$ gives rise to an associated 2-cocycle cogroupoid $\underline{H}$ as follows.

\begin{Ex}\cite[Definition 3.14]{Bichon2014}
\label{2-coc-cog}
Let $H$ be any Hopf algebra. The {\it 2-cocycle cogroupoid} of $H$, denoted by $\underline{H}$, is defined as follows:
\begin{itemize}
    \item[(1)] ${\rm ob}($\underline{H}$)=Z^2(H)$, which is the set of all left 2-cocycles on $H$.
    \item[(2)] For any $\sigma,\tau\in Z^2(H)$, the algebra $H(\sigma,\tau)$ is defined such that $H(\sigma,\tau)=H$ as vector spaces with the new multiplication
\[
x * y=\sum \sigma(x_1,y_1)\, x_2y_2\tau^{-1}(x_3,y_3),
\]
for any $x,y\in H(\sigma,\tau)$.
\item[(3)] The structural maps $\Delta^{\bullet}_{\bullet,\bullet}$, $\varepsilon_\bullet$ and $S_{\bullet,\bullet}$ are given as: for any $\sigma,\tau,\omega\in Z^2(H)$,
\begin{align*}
    \Delta^\omega_{\sigma,\tau}=\Delta: H(\sigma,\tau)&\longrightarrow H(\sigma,\omega)\otimes H(\omega,\tau)\\
    x&\longmapsto \sum x_1\otimes x_2, \\
\varepsilon_\sigma=\varepsilon: H(\sigma,\sigma)&\longrightarrow \kk, \\
S_{\sigma,\tau}: H(\sigma,\tau)&\longrightarrow H(\tau,\sigma)\\
x&\longmapsto \sum \sigma(x_1,S(x_2))\, S(x_3)\, \tau^{-1}(S(x_4),x_5)).
\end{align*}
\end{itemize}
\end{Ex}

\begin{Ex}
As in \Cref{2-coc-cog}, let $H$ be any Hopf algebra. Take $\underline{H}_Z$ to be the full subcogroupoid of $\underline{H}$, where the objects correspond to those 2-cocycles arising from twisting pairs (using the language of \cite{HNUVVW21}). In this case, for a 2-cocyle $\sigma$ on $H$, the Hopf algebra $\underline{H}_Z(\sigma,\sigma)$ is equal to a twist (in the sense of \cite{ATV1991, Zhang1996}) of $H$ by a graded automorphism (see \cite[Remark 2.9]{Bichon-Neshveyev-Yamashita2016} or \cite[Theorem E]{HNUVVW21}).
\end{Ex}

\section{The cogroupoids associated to preregular forms}

In this section, we introduce a family of cogroupoids associated to preregular forms and discuss their properties. In \Cref{prop:AS2}, using the language of cogroupoids, we show the Morita--Takeuchi equivalence for AS-regular algebras of dimension two.

\subsection{Construction of the cogroupoid $\mathcal {GL}_m$}

Let $V$ be a $k$-dimensional vector space and $W$ be an $l$-dimensional vector space over $\kk$ with fixed bases $\{v_1,...,v_k\}$ of $V$ and $\{w_1,..., w_l\}$ of $W$. For any integer $m \ge 2$, let $e$ be an $m$-linear preregular form on $V$ and $f$ be an $m$-linear preregular form on $W$. Recall that we write  $e\left(v_{i_1}, \dots, v_{i_m} \right)= {e}_{i_1 \cdots i_m}\in \kk$ and $f\left(w_{i_1}, \dots, w_{i_m} \right)= {f}_{i_1 \cdots i_m}\in \kk$, respectively.

\begin{defn}\label{def:GL}
We define $\mathcal{GL}_m(e,f)$ to be the $\kk$-algebra with $(2kl+2)$ generators 
    \[\mathbb A=(a_{ij})_{\substack{1\leq i\leq k \\ 1\leq j\leq l}}, \qquad \mathbb B=(b_{ij})_{\substack{1\leq i\leq l \\ 1\leq j\leq k}}, \qquad D^{\pm 1},\]
subject to the relations
\begin{equation}
\label{eq:alg}
\left. 
\begin{aligned}
    \sum_{1\leq i_1,\ldots,i_m\leq k}{e}_{i_1 \cdots i_m}a_{i_1j_1}\cdots a_{i_mj_m} &= {f}_{j_1\cdots j_m}D, &\textnormal{ for any } 1\leq j_1,\dots, j_m\leq l,\\
    \sum_{1\leq i_1,\ldots,i_m\leq l}{f}_{i_1\cdots i_m}b_{i_mj_m}\cdots b_{i_1j_1} &= {e}_{j_1\cdots j_m}D^{-1},  &\textnormal{ for any } 1\leq j_1,\dots, j_m\leq k,\\
    DD^{-1} &= D^{-1}D=1, \text{ and } \\
    \mathbb{A}\mathbb{B} &= \mathbb{I}_{k\times k}.
\end{aligned}\right\}
\end{equation}
We denote the generators of  $\mathcal{GL}_m(e,f)$ by $a_{ij}^{e,f}, b_{ij}^{e,f},$ and $\left( D^{e,f}\right)^{\pm 1}$ when multiple preregular forms are involved, and omit the superscripts when the context is clear. In particular, if $W=V$ and $f=e$, then we simply write $\mathcal{GL}_m (e)=\mathcal{GL}_m(e,e)$. 
\end{defn}

\begin{remark}
\label{gl-manin}
We note that $\mathcal{GL}_m(e)$ is the algebra $\mathcal H(e)$ associated to a preregular form $e$, as defined in \cite[Definition 5.1]{Chirvasitu-Walton-Wang2019}. It is a Hopf algebra with the Hopf structure given in \cite[Proposition 5.8]{Chirvasitu-Walton-Wang2019}. When the superpotential algebra $A(e,N)$ is $N$-Koszul and Gorenstein, $\mathcal{GL}_m(e)$ is Manin's universal quantum group coacting on $A(e,N)$ (see \cite[Theorem 5.33]{Chirvasitu-Walton-Wang2019}). 
\end{remark}

\begin{lemma}
\label{lem:4.7}
For any integer $m\geq 2,$
$\mathcal{GL}_m$ forms a $\kk$-cocategory, where the objects are $m$-linear preregular forms on $\kk$-vector spaces. In particular, for any vector spaces $U,V,W$ with $\dim U=p,$ $\dim V=q,$ and $\dim W=r$, for any $m$-linear preregular forms $e$ on $U$, $f$ on $V$ and $g$ on $W$, there exist algebra maps
\[ \Delta = \Delta_{e,g}^f: \mathcal{GL}_m(e,g)\to \mathcal{GL}_m(e,f)\otimes \mathcal{GL}_m(f,g) \]
such that 
\begin{align*}
\Delta(a_{ij}^{e,g})&=\sum_{k=1}^q a^{e,f}_{ik}\otimes a^{f,g}_{kj}, \qquad \text{for } 1\leq i\leq p, 1\leq j\leq r,\\
\Delta(b_{ji}^{e,g})&=\sum_{k=1}^q b^{e,f}_{ki}\otimes b^{f,g}_{jk}, \qquad \text{for } 1\leq i\leq p, 1\leq j\leq r,\\
\Delta((D^{e,g})^{\pm 1})&=(D^{e,f})^{\pm 1}\otimes (D^{f,g})^{\pm 1}, \end{align*} and
\[ \varepsilon_{e}: \mathcal{GL}_m(e)\to \kk\]
such that $\varepsilon_{e}(a_{ij}^{e,e})=\varepsilon_{e}(b_{ji}^{e,e})=\delta_{ij}$, for $1\leq i,j\leq p$, and $\varepsilon_e((D^{e,e})^{\pm 1})=1$. 
\end{lemma}

\begin{proof}
Since $\Delta = \Delta_{e,g}^f$ is already defined on the generators of $\mathcal{GL}_m(e,g)$, it suffices to verify that it preserves all relations:
\[
\begin{aligned}
     &\Delta\left(\sum_{1\leq i_1,\ldots,i_m\leq p}{e}_{i_1 \cdots i_m}a_{i_1j_1}\cdots a_{i_mj_m}\right)\\
     &= \sum_{\substack{1\leq i_1,\ldots,i_m\leq p\\1\leq k_1,\ldots,k_m\leq q}} \left(e_{i_1\cdots i_m} a_{i_1k_1}\cdots a_{i_mk_m}\right)\otimes a_{k_1j_1}\cdots a_{k_mj_m}\\
     &= \sum_{1\leq k_1,\ldots,k_m\leq q} f_{k_1\cdots k_m} D\otimes a_{k_1j_1}\cdots a_{k_mj_m} \\
     &=\sum_{1\leq k_1,\ldots,k_m\leq q}   D\otimes \left(f_{k_1\cdots k_m} a_{k_1j_1}\cdots a_{k_mj_m}\right)\\
     &= D\otimes g_{j_1\cdots j_m} D = \Delta\left(g_{j_1\cdots j_m}D\right).
     \end{aligned}
     \]
Similarly, we may also show that 
\[ 
\begin{aligned}
    \Delta\left(\sum_{1\leq i_1,\ldots,i_m\leq r}{g}_{i_1\cdots i_m}b_{i_mj_m}\cdots b_{i_1j_1}\right)
     =&\Delta\left(e_{j_1\ldots j_m}D^{-1}\right).
\end{aligned}
\]
On the other hand, one can check that 
\[\Delta\left(DD^{-1}\right)=\Delta\left(D\right)\Delta\left(D^{-1}\right)=1 \otimes 1=\Delta\left(1\right)=\Delta\left(D^{-1}\right)\Delta\left(D\right)=\Delta\left(D^{-1}D\right), \]
and 
\[
\begin{aligned}
\Delta\left(\sum_{1\leq k\leq r} a_{ik}b_{kj}\right) &=\sum_{1\leq k\leq r} \sum_{1\leq s,t\leq q}a_{is}b_{tj}\otimes a_{sk}b_{kt}= \sum_{1\leq s,t\leq q}a_{is}b_{tj}\otimes \left(\sum_{1\leq k\leq r}a_{sk}b_{kt}\right)\\
&=\sum_{1\leq s,t\leq q}\delta_{st}a_{is}b_{tj}\otimes 1 
=\sum_{1\leq s\leq q} a_{is}b_{sj}\otimes 1=\delta_{ij} 1 \otimes 1=\Delta(\delta_{ij} 1).
\end{aligned}
\]
Hence $\Delta_{e,g}^f: \mathcal{GL}_m(e,g)\to \mathcal{GL}_m(e,f)\otimes \mathcal{GL}_m(f,g)$ is a well-defined algebra map. Note that $\varepsilon: \mathcal{GL}_m(e)\to \kk$ is a well-defined algebra map since $\mathcal{GL}_m(e)$ is a Hopf algebra \cite[Proposition 5.8]{Chirvasitu-Walton-Wang2019}. 
It remains to show that the diagrams in \Cref{defn:cocategory} commute, which is straightforward on the generators. 
\end{proof}

The following result is similar to \cite[Lemma 5.6]{Chirvasitu-Walton-Wang2019} in the context of cogroupoids, and we leave its proof to the reader. 

\begin{lemma}
\label{lem:4.9}
Let $V$ and $W$ be $\kk$-vector spaces of dimension $k$ and $l$, respectively, Consider two $m$-preregular forms $e$ and $f$ on $V$ and $W$, respectively, together with invertible matrices $\mathbb P\in \GL_k(\kk) = \GL(V)$ and $\mathbb Q\in \GL_l(\kk) = \GL(W)$ as in \eqref{eq:preregular}. Then the following equalities hold in $\mathcal{GL}_m(e,f)$:
\begin{gather*}
    D^{-1}\mathbb Q^T\mathbb A^T\mathbb P^{-T} D\mathbb B^T=\mathbb B\mathbb A=\mathbb I_{l \times l} \qquad \text{and} \qquad \mathbb B^TD^{-1}\mathbb Q^T\mathbb A^T\mathbb P^{-T}D=\mathbb A\mathbb B=\mathbb I_{k \times k}.
\end{gather*}
\end{lemma}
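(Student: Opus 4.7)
The key matrix identity to establish is
\[\mathbb{B}\cdot D\mathbb{P}^{-1}\mathbb{A}\mathbb{Q}D^{-1}=\mathbb{I}_{l\times l},\]
which is the transpose of the first claimed equation. The relation $\mathbb{AB}=\mathbb{I}_{k\times k}$ is already a defining relation of $\mathcal{H}(e,f)$, so once this key identity is in hand the remaining claims follow automatically: setting $\mathbb{X}^T:=D\mathbb{P}^{-1}\mathbb{A}\mathbb{Q}D^{-1}$, the standard manipulation
\[\mathbb{X}^T=\mathbb{I}_k\cdot\mathbb{X}^T=(\mathbb{A}\mathbb{B})\mathbb{X}^T=\mathbb{A}(\mathbb{B}\mathbb{X}^T)=\mathbb{A}\cdot\mathbb{I}_l=\mathbb{A}\]
forces $\mathbb{X}^T=\mathbb{A}$, so $\mathbb{B}\mathbb{A}=\mathbb{B}\mathbb{X}^T=\mathbb{I}_{l\times l}$ and $\mathbb{X}^T\mathbb{B}=\mathbb{A}\mathbb{B}=\mathbb{I}_{k\times k}$; transposing the last equation gives the remaining claim $\mathbb{B}^T\mathbb{X}=\mathbb{I}_{k\times k}$.

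For the key identity, my plan is to telescope the first defining relation. Multiplying
\[\sum_{i_1,\ldots,i_m}e_{i_1\cdots i_m}\,a_{i_1j_1}\cdots a_{i_mj_m}=f_{j_1\cdots j_m}D\]
on the right by $b_{j_ms_m}b_{j_{m-1}s_{m-1}}\cdots b_{j_2s_2}$ and summing over $j_2,\ldots,j_m$, the iterated cancellation $\sum_{j_r}a_{i_rj_r}b_{j_rs_r}=\delta_{i_rs_r}$ (from $\mathbb{AB}=\mathbb{I}$) collapses the product to
\[\sum_{i_1}e_{i_1s_2\cdots s_m}\,a_{i_1j_1}=\sum_{j_2,\ldots,j_m}f_{j_1\cdots j_m}\,D\,b_{j_ms_m}\cdots b_{j_2s_2}.\]
Contracting with $\mathbb{Q}_{j_1t}$ and summing over $j_1$, the auxiliary identity $\sum_{j_1}f_{j_1\cdots j_m}\mathbb{Q}_{j_1t}=f_{j_2\cdots j_mt}$ (a consequence of the preregularity of $f$ together with $\mathbb{Q}^{-1}\mathbb{Q}=\mathbb{I}$) transforms the right-hand side; a second application of preregularity then converts it into $\sum_k\mathbb{Q}_{k,t}DW_k$, where $W_k$ is a partial product that completes to an instance of the second defining relation upon appending $b_{ks_1}$. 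A parallel manipulation of the left-hand side using the preregularity of $e$ (which introduces factors of $\mathbb{P}^{-1}$), followed by the non-degeneracy of $e$ from condition (1)(a) of Definition 2.1 to strip the $e$-coefficients, will yield the desired matrix identity.

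The main obstacle is careful bookkeeping: since $D$ is not a priori central in $\mathcal{H}(e,f)$, factors of $D^{\pm 1}$ cannot be freely moved past products of $a_{ij}$ and $b_{ij}$, and the order of every multiplication must be tracked throughout. A secondary difficulty is ensuring that, after the successive preregularity manipulations, the resulting identity takes the precise form $\sum_{i_1}e_{i_1s_2\cdots s_m}X(i_1)=\sum_{i_1}e_{i_1s_2\cdots s_m}Y(i_1)$ with $X,Y$ independent of $s_2,\ldots,s_m$, so that non-degeneracy of $e$ gives $X=Y$ pointwise. This computation is the cogroupoid analogue of the single-object result \cite[Lemma 5.6]{Chirvasitu-Walton-Wang2019} referenced by the authors.
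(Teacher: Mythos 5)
Your overall skeleton---use $\mathbb A\mathbb B=\mathbb I$ to telescope the first defining relation, extract an explicit left inverse of $\mathbb A$ from the preregularity and nondegeneracy of the forms, and then conclude via the associativity trick $\mathbb X^T=(\mathbb A\mathbb B)\mathbb X^T=\mathbb A(\mathbb B\mathbb X^T)=\mathbb A$---matches the paper's argument, which builds a left inverse $\mathbb C$ of $\mathbb A$ explicitly from an inverse form $\widetilde f$ satisfying $\sum_{i_1,\ldots,i_{m-1}}\widetilde f_{ii_1\cdots i_{m-1}}f_{i_1\cdots i_{m-1}j}=\delta_{ij}$ and shows $\mathbb C=\mathbb C(\mathbb A\mathbb B)=(\mathbb C\mathbb A)\mathbb B=\mathbb B$. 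But there is a genuine gap in how you pass between your ``key identity'' and the equations actually asserted: over the noncommutative algebra $\mathcal H(e,f)$ transposition does not reverse products, so $\mathbb B\,D\mathbb P^{-1}\mathbb A\mathbb Q D^{-1}=\mathbb I$ is \emph{not} the transpose of $D^{-1}\mathbb Q^T\mathbb A^T\mathbb P^{-T}D\,\mathbb B^T=\mathbb I$. Comparing entries, your identity involves sums of terms $b_{ir}\,D\,a_{st}\,D^{-1}$, while the lemma's first equation involves $D^{-1}a_{st}D\,b_{jr}$: the order of the $a$- and $b$-factors is reversed \emph{and} the $D$-conjugation goes the wrong way. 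The same problem kills the final step ``transposing $\mathbb X^T\mathbb B=\mathbb I$ gives $\mathbb B^T\mathbb X=\mathbb I$'': $(\mathbb X^T\mathbb B)^T$ has $(i,j)$ entry $\sum_k\mathbb X_{kj}\mathbb B_{ki}$ whereas $(\mathbb B^T\mathbb X)_{ij}=\sum_k\mathbb B_{ki}\mathbb X_{kj}$, and in any case your $\mathbb X$ has entries built from $D\,a_{st}\,D^{-1}$ rather than the $D^{-1}a_{st}D$ appearing in $D^{-1}\mathbb Q^T\mathbb A^T\mathbb P^{-T}D$. So even granting your key identity, neither displayed equation of the lemma follows.

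A second, related gap: the identity $\mathbb B^TD^{-1}\mathbb Q^T\mathbb A^T\mathbb P^{-T}D=\mathbb I$ needs its own computation starting from the \emph{second} defining relation (the one in the $b_{ij}$ with $D^{-1}$); the paper does this by constructing a right inverse $\mathbb D^T$ of $\mathbb B^T$ from an inverse form $\widetilde e$ of $e$ and then identifying $\mathbb D^T=D^{-1}\mathbb Q^T\mathbb A^T\mathbb P^{-T}D$ by the same associativity trick. Your plan derives everything from the $a$-relation plus transposition, which cannot work for the reason above. Finally, the core computation is only sketched, and the part you defer---where the $D^{\pm1}$ factors sit---is precisely where the correct form $D^{-1}(\cdots)D$ (rather than $D(\cdots)D^{-1}$) is forced: your telescoped right-hand side has $D$ to the left of the $b$'s while the defining relation places $D^{-1}$ on the right. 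The paper resolves this by hard-wiring $D^{-1}$ into the definition $c_{ij}=\sum D^{-1}\widetilde f_{ii_1\cdots i_{m-1}}a_{j_1i_1}\cdots a_{j_{m-1}i_{m-1}}e_{j_1\cdots j_{m-1}j}$ and verifying $\mathbb C\mathbb A=\mathbb I$ entrywise; I recommend reworking your computation in that form and proving the two displayed identities separately rather than by transposition.
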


\begin{lemma}
\label{lem:antipode}
For any integer $m \geq 2$, the cocategory $\mc{GL}_m$ forms a cogroupoid, with $\Delta$ and $\varepsilon$ defined as in \Cref{lem:4.7}, and the algebra map
\[
S_{e,f}: \mathcal{GL}_m(e,f)\to \mathcal{GL}_m(f,e)^{\operatorname{op}}
\]
is defined by the formulas 
\begin{align*}
S_{e,f}\left(\mathbb A^{e,f}\right)&=\mathbb B^{f,e},\\
S_{e,f}\left(\mathbb B^{e,f}\right)&=\left(D^{f,e}\right)^{-1}\,\mathbb Q^{-1}\,\mathbb A^{f,e}\,\mathbb P\, D^{f,e},\\ S_{e,f}\left(\left(D^{e,f}\right)^{\pm 1}\right)&=\left(D^{f,e}\right)^{\mp 1}.
\end{align*}
\end{lemma}

\begin{proof}
Set $\dim V=k$ and $\dim W=l$. We first show that $S_{e,f}: \mathcal {GL}_m(e,f)\to \mathcal{GL}_m(f,e)^{\operatorname{op}}$ preserves the relations in $\mathcal{GL}_m(e,f)$ and hence it is a well-defined algebra map. One can see that
\[
\begin{aligned}
  S_{e,f}\left(\sum_{1\leq i_1,\ldots,i_m\leq k}{e}_{i_1 \cdots i_m}a_{i_1j_1}\cdots a_{i_mj_m}\right)&=\sum_{1\leq i_1,\ldots,i_m\leq k}{e}_{i_1 \cdots i_m}S_{e,f}\left(a_{i_mj_m}\right)\cdots S_{e,f}\left(a_{i_1j_1}\right)\\
  &=\sum_{1\leq i_1,\ldots,i_m\leq k}{e}_{i_1 \cdots i_m}b_{i_mj_m}\cdots b_{i_1j_1}\\
  &=f_{j_1\cdots j_m}D^{-1}\\
  &=S_{e,f}\left(f_{j_1\ldots j_m}D\right),
\end{aligned}
\]
and similarly,
\[
\begin{aligned}
  S_{e,f}\left(\sum_{1\leq i_1,\ldots,i_m\leq l}{f}_{i_1 \cdots i_m}a_{i_mj_m}\cdots a_{i_1j_1}\right)
  &=S_{e,f}\left(e_{j_1\ldots j_m}D^{-1}\right).
\end{aligned}
\]
Moreover, it is clear that $S_{e,f}$ preserves the relation $DD^{-1}=D^{-1}D=1$ and 
\[
\begin{aligned}
 S_{e,f}\left(\sum_{1\leq p\leq l} a_{ip}b_{pj}\right)&=\sum_{1\leq p\leq l} S(b_{pj})S\left(a_{ip}\right)=\sum_{1\leq p\leq l} (D^{-1}\mathbb Q^{-1}\mathbb A\mathbb PD)_{pj}\mathbb B_{ip}\\
 &=\sum_{1\leq p\leq l} (D^{-1}P^T\mathbb A^T\mathbb Q^{-T}D)_{jp}\mathbb B^T_{pi}=(D^{-1}\mathbb P^T\mathbb A^T\mathbb Q^{-T}D\mathbb B^T)_{ij}=\delta_{ij}.
\end{aligned}
\]
The last equality holds in $\mathcal{GL}_m(f,e)$ by \Cref{lem:4.9}. 
It remains to show the commutativity of the two diagrams in \Cref{defn:cogroupoid}, which is straightforward to check on generators. 
\end{proof}

We summarize the discussion above in the following definition/theorem, which follows similarly to \cite[Definitions 2.1 and 2.4]{Bichon2014}.

\begin{defn}
\label{defn:he}
For any integer $m\ge 2$, the cogroupoid $\mathcal {GL}_m$ is defined as follows:
\begin{enumerate}
    \item ${\rm ob}(\mathcal{GL}_m)=\{e: V^{\otimes m}\to \kk\mid \text{$e$ is a preregular form on some finite-dimensional vector space $V$}\}$.
    \item For $e,f\in {\rm ob}(\mathcal{GL}_m)$, $\mathcal {GL}_m(e,f)$ is the algebra defined in \Cref{def:GL}.
    \item The structural maps $\Delta_{\bullet,\bullet}^\bullet,\ \varepsilon_\bullet, \ S_{\bullet,\bullet}$ are given in \Cref{lem:4.7,lem:antipode}.
\end{enumerate} 
\end{defn}

\subsection{Connectivity of $\mathcal {GL}_2$}

It is proved in \cite[Theorem 7.2.3]{vdb2017} that the universal quantum groups of any two Koszul AS-regular algebras are Morita--Takeuchi equivalent as long as the two algebras share the same global dimension. We present an alternate criterion for the Morita--Takeuchi equivalence of universal quantum groups, using the language of cogroupoids.

\begin{proposition}
\label{thm:FormMorita}
Let $2 \leq N, N'\leq m$ be three integers, and $V,W$ be two finite-dimensional $\kk$-vector spaces. Let $e$ and $f$ be two $m$-preregular forms on $V$ and $W$, respectively, such that the associated superpotential algebras $A=A(e,N)$ and $B=A(f,N')$ are two $N$ and $N'$-Koszul AS-regular algebras. If the algebra $\mathcal{GL}_m(e,f)\neq 0$, then the universal quantum groups $\underline{\rm aut}^l(A)$ and $\underline{\rm aut}^l(B)$ are Morita--Takeuchi equivalent.
\end{proposition}

\begin{proof}
By \cite[Theorem 5.33]{Chirvasitu-Walton-Wang2019}, we have $\underline{\rm aut}^l(A(e,N)) \cong \mathcal{GL}_m(e)$ and $\underline{\rm aut}^l(A(f,N')) \cong \mathcal{GL}_m(f)$. The result follows from \cite[Theorems 2.10, 2.12]{Bichon2014} and the cogroupoid construction in \Cref{defn:he}.
\end{proof}

In the following, we give an application of \Cref{thm:FormMorita} for the case when $m=2$ and $e:V^{\otimes 2}\to \kk$ and $f: W^{\otimes 2}\to \kk$ are two preregular forms. As a consequence, it provides another proof of a special case of \cite[Theorem 7.2.3]{vdb2017} for AS-regular algebras of dimension two. Recall that in the classification of AS-regular algebras of dimension two \cite[Theorem 0.1]{Zhang1998}, being AS-regular is equivalent to being a superpotential algebra.

\begin{thm}
\label{prop:AS2}
Let $A$ and $B$ be any two AS-regular algebras of dimension two. Then $\uaut^l(A)$ and $\uaut^l(B)$ are Morita--Takeuchi equivalent. 
\end{thm}

\begin{proof}
By the classification of AS-regular algebras of dimension 2 in \cite[Theorem 0.1]{Zhang1998}, $A$ and $B$ can be presented as superpotential algebras $A=A(e,2)$ and $B=A(f,2)$, respectively, for some $2$-preregular forms $e$ and $f$. Hence, by Remark \ref{gl-manin} and \Cref{thm:FormMorita}, it suffices to show that the bi-Galois object  $\mc{GL}_2(e,f)$ between $\uaut^l(A) = \mc{GL}_2(e) $ and $\uaut^l(B) = \mc{GL}_2(f)$ is nonzero. 

Suppose $e$ and $f$ are preregular forms on vector spaces $V$ and $W$, respectively, of dimensions $k$ and $l$. We fix a basis $\{v_1,\ldots,v_k\}$ for $V$ and  write $e$ as a matrix $\mathbb E\in M_k(\kk)$ such that $E_{ij}=e(v_i,v_j)$ for $1 \leq i,j \leq k$. It is easy to check that $e$ is a preregular form if and only if $\mathbb E\in \GL(V) = \GL_k(\kk)$; in this case, the twisting matrix for $e$ is given by $\mathbb P=\mathbb E^{-T}\mathbb E$. Similarly, we denote the matrix $\mathbb F\in \GL(W)=\GL_l(\kk)$ associated to the preregular form $f$, and the twisting matrix for $f$ is given by $\mathbb Q=\mathbb F^{-T}\mathbb F$.

By \eqref{eq:alg}, the $\kk$-algebra $\mathcal{GL}_2(e,f)$ 
is presented by $(2kl+2)$ generators 
    \[\mathbb A=(a_{ij})_{\substack{1\leq i\leq k \\ 1\leq j\leq l}}, \qquad \mathbb B=(b_{ij})_{\substack{1\leq i\leq l \\ 1\leq j\leq k}}, \qquad D^{\pm 1},\]
    subject to the relations
 \begin{align}\label{them:relAut}
    \mathbb A^T\,\mathbb E\,\mathbb A =\mathbb F\,D, \quad \mathbb B^T\,\mathbb F^T\,\mathbb B=\mathbb E^T\,D^{-1},\quad  \mathbb A\, \mathbb B=\mathbb I_{k \times k}, \text{ and } \quad DD^{-1}=D^{-1}D=1.  
  \end{align}
By \Cref{lem:4.9}, we also have $\mathbb B\,\mathbb A=\mathbb I_{l \times l}$. Hence  
\begin{align*}
    \mathbb E^T\,D^{-1}\mathbb A=(\mathbb B^T\,\mathbb F^T\,\mathbb B)\mathbb A=\mathbb B^T\,\mathbb F^T\,(\mathbb B\,\mathbb A)=\mathbb B^T\,\mathbb F^T.
\end{align*}
This implies that $\mathbb B^T=D^{-1}\,\mathbb E^T\,\mathbb A\,\mathbb F^{-T}$, and so $\mathbb B=D^{-1}\,\mathbb F^{-1}\,\mathbb A^T\,\mathbb E$. Hence 
$\mc{GL}_2(e,f)$ is the quotient of the free algebra $\kk\langle \mathbb A,D^{\pm 1}\rangle$ by the relations
\begin{align*}
\mathbb A^T\,\mathbb E\,\mathbb A=\mathbb F\, D,\quad \mathbb A\, D^{-1}\,\mathbb F^{-1}\,\mathbb A^{T}=\mathbb E^{-1}, \text{ and } \quad DD^{-1}=D^{-1}D=1.
\end{align*}
Recall the algebra $\mathscr B(\mathbb E,\mathbb F)$ defined in \cite[Definition 3.1]{Bichon2003}. One checks directly that 
\[\mc{GL}_2(e,f)/(D-1)\cong \mathscr B(\mathbb E,\mathbb F).\]
For arbitrary $\mathbb F \in \GL(W)$ with corresponding preregular form $f$, let $k=2$ and 
\[\mathbb E=\mathbb E_q:=\begin{pmatrix} 
0 & 1\\ -q^{-1}  &0
\end{pmatrix}\in \GL_2(\kk)
\]
such that $q^2+{\rm tr}(\mathbb F^T\mathbb F^{-1})+1=0$. Denote $e_q$ as the preregular form corresponding to $\mathbb{E}_q$. Then $
\mc{GL}_2(e_q,f)\neq 0$ by \cite[Proposition 3.4]{Bichon2003}. 

Now in view of \cite[Proposition 2.15]{Bichon2014}, to show that $\mc{GL}_2(e,f) \neq 0$ it suffices to show that $\mc{GL}_2(e_q,e_{q'}) \neq 0$ for $q,q' \in \kk^\times$, which follows from the lemma below. 
\end{proof}

\begin{lemma}
Let $e$ and $f$ be two $2$-preregular forms on the same vector space $V$. Then $\mc{GL}_2(e,f) \neq 0.$ 
\end{lemma}

\begin{proof}
Use the notation from the previous proof.
Note that from the presentation given above, by setting the degree of each $a_{ij}$ to be 1, the degree of each $b_{ij}$ to be $-1$, and the degree of $D^{\pm 1}$ to be $\pm 2$, $\mathcal{GL}_2(e,f)$ is a $\mathbb{Z}$-graded algebra. Let $\mathbb M \in$ $\GL(V)=\GL_k(\kk)$. We construct a nonzero graded representation $ U$ for $\mathcal {GL}_2(e,f)$ based on the matrix $\mathbb M$ in the following way:
\begin{enumerate}
    \item As a vector space, set $U:=\bigoplus_{d \in \mathbb{Z}} U_d$, where each $U_d$ is defined to be the 1-dimensional vector space $\kk$.
    \item Set $\mathbb M_0 := \mathbb M$ and inductively define
    \[
    \begin{cases} 
    \mathbb M_{d+1} := \mathbb{E}^{-T} \mathbb M_d^{-T} \mathbb F^T, & \text{for } d \geq 0\\
    \mathbb{M}_{d-1}:=\mathbb E^{-1} \mathbb M_d^{-T} \mathbb F, & \text{for } d \leq 0.
    \end{cases}
    \]
    \item Define the action of $\mathbb{A}$ on each graded component $U_d$ to be given by scalar multiplication $U_d \to U_{d+1}$, according to the matrix $\mathbb{M}_d$. Similarly, define the action of $\mathbb{B}$ on the graded component $U_d$ to be given by scalar multiplication $U_d \to U_{d-1}$, given by the matrix $\mathbb M_{d-1}^{-1}.$ This gives the following diagram, where the action of $\mathbb{A}$ moves to the right, and the action of $\mathbb{B}$ moves to the left:
\[
\xymatrix{
... \ar@/^1.0pc/[r]^-{}&U_d \ar@/^1.0pc/[l]^-{} \ar@/^1.0pc/[rr]^-{\mathbb{M}_{d}} && U_{d+1} \ar@/^1.0pc/[rr]^-{\mathbb{M}_{d+1}} \ar@/^1.0pc/[ll]^-{\mathbb{M}_{d}^{-1}} && U_{d+2} \ar@/^1.0pc/[ll]^-{\mathbb{M}_{d+1}^{-1}}\ar@/^1.0pc/[r]^-{}&...\ar@/^1.0pc/[l]^-{}
}
\]
\item The action of $D^{\pm 1}$ on $U_d$ will be defined as the multiplication by 1 from $U_d \to U_{d \pm 2}$. 
\end{enumerate}

By the equalities
\begin{align*}
    \mathbb{M}^T_{d+1} \mathbb E \mathbb M_d = \mathbb F, \qquad
    \mathbb M_{d-1}^{-T} \mathbb F^T \mathbb M_d^{-1} = \mathbb E^T, \text{ and } \qquad
    \mathbb{M}_d \mathbb{M}_d^{-1} = \mathbb I_{k \times k}, 
\end{align*}
for all $d \in \mathbb{Z}$, these actions respect the relations \eqref{them:relAut}. Hence $U$ is a nonzero representation for $\mathcal{GL}_2(e,f)$, and so this algebra is itself nonzero. 
\end{proof}

The following result is a straightforward consequence of the proof of \Cref{prop:AS2}.

\begin{Cor}
\label{connected}
The cogroupoid $\mathcal{GL}_2$ is connected.
\end{Cor}

\begin{Ex}
\label{ex-grobner}
Let $\mathbb E$ and $\mathbb F$ be identity matrices on $k$ and $l$-dimensional vector spaces $V$ and $W$, respectively, with $l$ and $k \geq 2$. In this case, it can be checked that the following collection of relations forms a noncommutative Gr{\"o}bner basis (see e.g., \cite[Section 1.4]{Ro2016}) for the ideal of relations for $\mathcal{GL}_2(e,f)$, under the graded lexicographic ordering with the variables ordered $a_{11}>a_{12}>...>a_{21}>... >b_{11}>b_{12}>...>b_{lk} > D>D^{-1}$ (and where $\delta_{hi}$ in all formulas represents the Kronecker delta function):
\begin{align*}
     \mathbb A \mathbb B &= \mathbb I_{k \times k},&
     \mathbb B \mathbb A &= \mathbb I_{l \times l},&
     D^{-1} \mathbb A &= \mathbb B^T, \\
     D \mathbb B &= \mathbb A^T,&
     DD^{-1}&=1,&
     D^{-1} D &= 1, \\
     \mathbb B^T \mathbb B &= D^{-1} \mathbb I_{k \times k},&
     \mathbb A^T \mathbb A &= D \mathbb I_{l \times l};
     \end{align*}
     for all triples $1 \leq h,i,j \leq k,$ the relations 
     \begin{align*}
         \sum_{l \geq m > 1} ( a_{hm}b_{mi}b_{1j} - a_{h1} b_{mi} b_{mj}) &=   \delta_{hi} b_{1j}-\delta_{ij} a_{h1} D^{-1},\\
         \sum_{l \geq m > 1} ( b_{mh}b_{mi}b_{1j} - b_{1h} b_{mi} b_{mj}) &=  \delta_{hi} D^{-1} b_{1j}-\delta_{ij} b_{1h} D^{-1};
     \end{align*}
     and for all triples $1 \leq h,i,j \leq l,$ the relations 
     \begin{align*}
     \sum_{k \geq m >1} (b_{hm} a_{mi} a_{1j} - b_{h1} a_{mi} a_{mj}) &= \delta_{hi} a_{1j}-\delta_{ij} b_{h1} D,\\
     \sum_{k \geq m >1} (a_{mh} a_{mi} a_{1j} - a_{1h} a_{mi} a_{mj}) &=  \delta_{hi} D a_{1j}-\delta_{ij} a_{1h} D.
     \end{align*}
     It follows that there exists a basis for the algebra $\mathcal{GL}_2(e,f)$ consisting of all monomials in the variables $\mathbb A, \mathbb B, D$, and $D^{-1}$ which do not contain any leading term in the above list of relations. In particular, we can observe concretely that this algebra is nonzero. This calculation follows similarly to the argument given by Cohn in \cite[Section 5]{Cohn1966} for the related algebra with generators $\mathbb A$ and $\mathbb B$ and relations $\mathbb A \mathbb B = \mathbb I_{k \times k}$, $\mathbb B \mathbb A = \mathbb I_{l \times l}$, which was originally constructed by Leavitt \cite{Leavitt1957}. Note, however, that the additional relations for $\mathcal{GL}_2(e,f)$ lead to a more complicated normal form. 
\end{Ex}

\section{Preregular forms in pivotal tensor category and their 2-cocycle twists}
\label{sec:4}

In this section, we introduce categorical descriptions of superpotentials and preregular forms in pivotal tensor categories. We also study their realization in comodule categories over copivotal Hopf algebras. We refer the reader to \Cref{subsec:pivotal} for some background on such categories. 

\subsection{Twisted superpotentials and preregular forms in pivotal tensor categories}
Consider a $\kk$-linear Hom-finite tensor category $(\mathcal C,\,\otimes,\,\Phi,\, \mathbb{1},\,r,\,\ell)$ with pivotal structure $j: {\rm id}_\mathcal C\to (-)^{**}$. For any $V\in \ob(\mc{C})$, we denote by $V^{\otimes n}$ the $n$-fold tensor product of $V$ with rightmost parentheses. This means $V^{\otimes 0}=\mathbb{1}$, and $V^{\otimes (n+1)}=V\otimes V^{\otimes n}$. Following the notation in \cite{Ng-Schauenburg}, there is a unique isomorphism:
\[
\Phi^{(n)}: V^{\otimes (n-1)}\otimes V\to V^{\otimes n}
\]
that is inductively defined by $\Phi^{(1)}: V \to V$ is the identity, and for $n \geq 1$
\[
\Phi^{(n+1)}=\left(V^{\otimes n}\otimes V=(V\otimes V^{\otimes (n-1)})\otimes V\xrightarrow{\Phi} V\otimes (V^{\otimes (n-1)}\otimes V)\xrightarrow{\id_V \otimes \Phi^{(n)}} V\otimes V^{\otimes n}=V^{\otimes (n+1)}\right).
\]
We now generalize the notion of a prereregular form to its categorical analogue.
\begin{defn}
\label{defn:preop}
Let $\mathcal C$ be a tensor category with pivotal structure $j$. For any integer $m\ge 2$ and object $V$ in $\mathcal C$, we define the following.
\begin{itemize}
    \item[(1)] A morphism $f: V^{\otimes m}\to \unit$ is called \emph{non-degenerate} if there is a surjection $\pi: V^{\otimes (m-1)}\twoheadrightarrow \!^*V$ for the right dual \;$\!^*V$ such that the diagram 
    \[
    \xymatrix{
    V^{\otimes m}\ar[rr]^-{f}\ar@{=}[d] && \unit\\
    V\otimes V^{\otimes (m-1)}\ar[rr]_-{\id_V\otimes \pi} && V\otimes\, \!^*V\ar[u]_-{\ev}
    }
    \]
     commutes.
    \item[(2)] A non-degenerate morphism $f: V^{\otimes m}\to \unit$ is called \emph{$m$-preregular of characteristic $q$} if $D_V^m(f)= qf$, for some $q \in \kk^\times$. Here, the operator $D_V^m: {\rm Hom}_\mathcal C(V^{\otimes m}, \unit)\to {\rm Hom}_\mathcal C(V^{\otimes m}, \unit)$ is defined as
\begin{align}\label{eq:DV}
D_V^m(f)&~:=~\left(V^{\otimes m}\xrightarrow{{\coev}\otimes \id} (\!^*V\otimes V)\otimes V^{\otimes m}\xrightarrow{\id \otimes (\Phi^{(m)})^{-1}} (\!^*V\otimes V)\otimes (V^{\otimes (m-1)}\otimes V)\right.\notag\\
&\quad \left.\xrightarrow{\Phi^?} \!^*V\otimes (V^{\otimes m}\otimes V)\xrightarrow{\id \otimes (f\otimes \id)} \!^* V\otimes V\xrightarrow{j_{\,\! {^*V}}\otimes \id} V^*\otimes V\xrightarrow{\ev} \unit\right),
\end{align}
for any $f\in {\rm Hom}_\mathcal C(V^{\otimes m}, \unit)$. See \Cref{subsec:pivotal} for the definition of $\Phi^?$.
\end{itemize}
We simply say $f: V^{\otimes m}\to \unit$ is \emph{preregular} if $f$ is preregular of characteristic $q=1$.
\end{defn}

\begin{Ex}
  Let $\mathcal{C}$ be the tensor category $\mathrm{Vec}_\kk$ of all finite-dimensional $\kk$-vector spaces. It is clear that $\mathrm{Vec}_\kk$ is pivotal with the pivotal structure given by the natural identification $V\cong V^{**}$ for any finite-dimensional vector space $V$ over $\kk$. Remark the operator $D_V^m$ on ${\rm Hom}_\kk(V^{\otimes m},\kk)$ defined in \Cref{defn:preregular}(2) is given by $D_V^m(f)(v_1,\ldots,v_m)=f(v_m,v_1,\ldots,v_{m-1})$ for any $\kk$-linear map $f: V^{\otimes m}\to \kk$ and $v_1,\ldots, v_m\in V$. Moreover, $f$ is an $m$-preregular form of character $q$ on $V$ if and only if it is an $m$-preregular form in the sense of \Cref{defn:preregular} with the matrix $\mathbb P\in \GL(V)$ given by $\mathbb P={\rm diag}(q,\ldots, q)$.    
\end{Ex}

To connect the generalization of a preregular form from \Cref{defn:preop} to the notion of superpotential, we recall some Hom-space operators used in the definition of higher Frobenius--Schur indicators, which play the same role as the cyclic condition in \Cref{defn:preregular}(1)(b) of a preregular form. Remark that superpotentials in the categorical context are slightly different from the formal dual of preregular forms. 

\begin{defn}\cite[Definition 3.1]{Ng-Schauenburg}
Let $\mathcal C$ be a tensor category with pivotal structure $j$ and $V,W \in \ob(\mathcal C)$.
\begin{itemize}
    \item[(1)] Define $T_{V,W}: \Hom_\mathcal C(V^*,W)\to \Hom_\mathcal C(W^*,V)$ via $T_{V,W}(f)=\left(W^*\xrightarrow{f^*} V^{**}\xrightarrow{{j_V}^{-1}} V\right)$. 
    \item[(2)] Define $A_{V,W}: \Hom_\mathcal C(\mathbb{1}, V\otimes W)\to \Hom_\mathcal C(V^*,W)$ via 
    \[A_{V,W}(h)~=~\left(V^*\xrightarrow{\id_{V^*} \otimes h} V^*\otimes (V\otimes W)\xrightarrow{\Phi^{-1}} (V^*\otimes V)\otimes W\xrightarrow{\ev\otimes \id_W} W\right)\]
    with inverse 
    \[
A^{-1}_{V,W}(g)~=~\left(\mathbb{1}\xrightarrow{\coev} V\otimes V^*\xrightarrow{\id_V \otimes g} V\otimes W\right).
    \]
    \item[(3)] For any $n\ge 1$, define 
    \begin{align*}
E_V^{(n)}~=~\left(\Hom_\mathcal C(\mathbb{1}, V^{\otimes n})\xrightarrow{A_{V,V^{\otimes (n-1)}}}\Hom_\mathcal C(V^*, V^{\otimes (n-1)})\xrightarrow{T_{V,V^{\otimes (n-1)}}} \Hom_\mathcal C((V^{\otimes (n-1)})^*,V)\right.\\
\left. \xrightarrow{A_{V^{\otimes (n-1)},V}^{-1}} \Hom_\mathcal C(\mathbb{1}, V^{\otimes (n-1)}\otimes V)\xrightarrow{\Hom_\mathcal C(\mathbb{1}, \Phi^{(n)})} \Hom_\mathcal C(\mathbb{1},V^{\otimes n}) \right).
    \end{align*}
\end{itemize}  
\end{defn}

We now define a superpotential in a categorical context by making use of the above Hom-space operators.
\begin{defn}\label{defn:superpotential2}
Let $\mathcal C$ be a tensor category with pivotal structure $j$. Let $m$ be an integer $\ge 2$ and $V \in \ob(\mathcal C)$. 
\begin{itemize}
\item[(1)] A morphism $s: \unit\to V^{\otimes m}$ is called \emph{non-degenerate} if the right dual $\,\!^*V$ is a subobject of $V^{\otimes (m-1)}$ with embedding  $\iota: \!^*V \hookrightarrow V^{\otimes (m-1)}$ such that the diagram 
    \[
    \xymatrix{
  \unit \ar[rr]^-{s}\ar[d]_-{\coev} && V^{\otimes m}\\
\!^*V\otimes V\ar[rr]_-{\iota \otimes \id_V} && V^{\otimes (m-1)}\otimes V\ar[u]_-{\Phi^{(m)}}
    }
    \]
    commutes. 
    \item[(2)] A non-degenerate morphism $s: \unit\to V^{\otimes m}$ is called a \emph{twisted-superpotential} if $E_V^{(m)}(s)=qs$, for some $q \in \kk^\times$.  In particular, $s$ is said to be a \emph{superpotential} if $s$ is non-degenerate and $q=1$. 
\end{itemize} 
\end{defn}

Remark that the above definition extends our earlier notion of superpotential in the category of finite-dimensional vector sapces ${\rm Vec}_\kk$: Let $V$ be an $n$-dimensional $\kk$-vector space and let $s: \kk\to V^{\otimes m}$ be a superpotential as in \Cref{defn:superpotential}. This is a superpotential in the sense of \Cref{defn:superpotential2}.

\begin{lemma}
\label{lem:conjugate}
Let $\mathcal C$ and $\mathcal D$ be any two pivotal tensor categories. For any pivotal equivalence $\mathcal F: \mathcal C\to \mathcal D$,  we have the following commutative diagram:
\[
\xymatrix{
{\rm Hom}_\mathcal C(V^{\otimes m},\,\unit)\ar[rr]^-{D_V^m}\ar[d]_-{\mathcal F} && {\rm Hom}_\mathcal C(V^{\otimes m},\,\unit)\ar[d]^-{\mathcal F}\\
{\rm Hom}_\mathcal D(\mathcal F(V^{\otimes m}),\,\unit)\ar[d]_-{{\rm Hom}_\mathcal D(\xi^{-1}_{V^{\otimes m}},\,\unit)} && {\rm Hom}_\mathcal D(\mathcal (V^{\otimes m}),\,\unit)\ar[d]^-{{\rm Hom}_\mathcal D(\xi^{-1}_{V^{\otimes m}},\,\unit)}
\\
{\rm Hom}_\mathcal D(\mathcal F\mathcal (V)^{\otimes m},\,\unit)\ar[rr]^-{D^m_{\mathcal F(V)}} && {\rm Hom}_\mathcal D(\mathcal F(V)^{\otimes m},\,\unit).
}
\]
In particular, pivotal equivalence preserves preregular forms of the same characteristic.  
\end{lemma}
\begin{proof}
Note that the operator $D_V^m$ can be interpreted as follows: for any $V,W \in \ob(\mathcal C)$, we define $D_{V,W}: \Hom_\mathcal C(V\otimes W,\unit)\to \Hom_\mathcal C(W\otimes V,\unit)$ such that
\begin{align*}
D_{V,W}(f)&:=\left(W\otimes V\xrightarrow{\coev\otimes \id} (\!^*V\otimes V)\otimes (W\otimes V)\xrightarrow{\Phi^? } \!^*V\otimes (V\otimes W)\otimes V\right.\\
&\quad \left.\xrightarrow{\id \otimes (f\otimes \id)} \!^*V\otimes V\xrightarrow{j_{\,\!{^*V}}\otimes \id} V^*\otimes V\xrightarrow{\ev} \unit\right),
\end{align*}
for any $f\in \Hom_\mathcal C(V\otimes W,\unit)$. We recall the operator $E_{V,W}: \Hom_\mathcal C(\unit, V\otimes W)\to \Hom_\mathcal C(\unit,W\otimes V)$ given in \cite[Definition 3.1]{Ng-Schauenburg} such that
\begin{align*}
E_{V,W}(f)&:=\left(\unit\xrightarrow{\coev\otimes \coev} (W\otimes W^*)\otimes (V^*\otimes V^{**})\xrightarrow{\Phi^? } (W\otimes (W^*\otimes V^*))\otimes V^{**}\right.\\
&\quad \xrightarrow{\id \otimes (f\otimes \id)} (W\otimes (W^*\otimes V^*))\otimes (V\otimes W)\otimes V^{**}\\
&\quad \xrightarrow{\Phi^?} W\otimes ((W^*\otimes V^*)\otimes (V\otimes W))\otimes V^{**}\\
&\quad \left.\xrightarrow{\id\otimes \ev\otimes \id} W\otimes V^{**}\xrightarrow{\id \otimes j_V^{-1}} W\otimes V \right).
\end{align*}
A straightforward computation shows that the diagram 
\begin{align}
\label{eq:DE}
    \xymatrix{
    \Hom_\mathcal C(V\otimes W,\unit)\ar[rr]^{D_{V,W}}\ar[d]_-{\cong} && \Hom_\mathcal C(W\otimes V,\unit)\ar[d]^-{\cong }\\
    \Hom_\mathcal C(\unit, (V\otimes W)^*)\ar[d]_-{\cong } && \Hom_\mathcal C(\unit, (W\otimes V)^*)\ar[d]^-{\cong }\\
    \Hom_\mathcal C(\unit, W^*\otimes V^*)\ar[rr]^-{E_{V^*,W^*}^{-1}}&& \Hom_\mathcal C(\unit, V^*\otimes W^*)
    }
\end{align}
commutes. Moreover, one sees that $D_V^m=D_{V,W}$ and $E_V^{(m)}=E_{V,W}$ (see \cite[Definition 3.1]{Ng-Schauenburg}) with $W=V^{\otimes (m-1)}$. So our result follows from \cite[Proposition 4.3]{Ng-Schauenburg} and the commutativity of \eqref{eq:DE}.
\end{proof}

By \Cref{lem:conjugate}, for the remainder of the paper we assume that the categories we consider are strict.

\subsection{Preregular forms in comodule categories over copivotal Hopf algebras}

In this subsection, we discuss preregular forms in an arbitrary comodule category that admits a pivotal structure \cite{FY, Y} and 2-cocycle twists of such preregular forms.

Let $(H,\Phi)$ be a copivotal Hopf algebra with character $\Phi$. It follows that the antipode $S$ of $H$ is bijective with inverse $S^{-1}=\Phi*S*\Phi^{-1}$. Moreover, by \cite[Proposition 3.10]{Bichon2001}, the category ${\rm comod}_{\rm fd}(H)$ admits a pivotal structure $\varphi: \!^*(-)\xrightarrow{\sim} (-)^*$. Here $\varphi$ is an isomorphism of monoidal functors between the right and left duality functors. For any finite-dimensional right $H$-comodule $V$, the linear map $\varphi_{V}: \!^*V\to V^*$ is defined as follows: $\varphi_V=(\id \otimes \Phi^{-1})\circ \rho_{\, \!^*V}$ (we use the fact that $\!^*V=V^*$ as vector spaces). To describe $\varphi_V$ explicitly, let $\{v_1,\ldots,v_n\}$ be a basis of $V$ with $\rho_V(v_i)=\sum_{i=1}^n v_j\otimes h_{ji}$ for $h_{ji}\in H$. Then 
\begin{align}\label{eq:cosovereign}
\varphi_V(v^i)=\sum_{j=1}^n \Phi(h_{ij})\,v^j
\end{align}
with dual basis $\{v^1,\ldots,v^n\}$ of $\!^*V$ and $V^*$.
We now generalize the notion of a copivotal (or cosovereign) Hopf algebra to a cogroupoid. 
 
\begin{defn}
A cogroupoid $\mathcal C$ is called \emph{copivotal}  
if for any $X\in {\rm ob}(\mathcal C)$, there exists some  
character $\Phi_X: \mathcal C(X,X)\to \kk$ such that for any $X,Y\in {\rm ob}(\mathcal C)$, 
\[
S_{Y,X}\circ S_{X,Y}=\Phi_X^{-1}*\id*\Phi_Y,
\]
where the right hand is defined as 
\begin{align*}
\Phi_X^{-1}*\id*\Phi_Y&:=\left(\mathcal C(X,Y)\xrightarrow{\Delta_{X,Y}^X} \mathcal C(X,X)\otimes \mathcal C(X,Y)\right.\\
&\quad \left.\xrightarrow{\id \otimes \Delta_{X,Y}^Y} \mathcal C(X,X)\otimes\mathcal C(X,Y)\otimes  C(Y,Y)\xrightarrow{\Phi_X^{-1}\otimes \id \otimes \Phi_Y} \mathcal C(X,Y)\right),
\end{align*}
and $\Phi_X^{-1} = \Phi_X \circ S_{X,X}$ is the convolution inverse of $\Phi_X$.
\end{defn}

Note that if the cogroupoid $\mathcal C$ only has one object $X$, then $\mathcal C$ is copivotal if and only if the Hopf algebra $\mathcal C(X,X)$ is a copivotal Hopf algebra (c.f.~\cite[Definition 3.7 and Remark 3.8]{Bichon2001}).

\begin{defn}\label{def:SL}
For any integer $m\ge 2$, we define the cogroupoid $\mathcal{SL}_m$ as
\begin{itemize}
    \item[(1)] ${\rm ob}(\mathcal {SL}_m)={\rm ob}(\mathcal{GL}_m)$.
    \item[(2)] For any two $m$-preregular forms $e$ and $f$, $\mathcal{SL}_m(e,f)=\mathcal{GL}_m(e,f)/(D-1)$ as in \eqref{eq:alg}.
    \item[(3)] The structural maps $\Delta_{\bullet,\bullet}^\bullet,\ \varepsilon_\bullet, \ S_{\bullet,\bullet}$ are all induced from those of $\mathcal{GL}_m$ in \Cref{defn:he}.
\end{itemize} 
We simply write ${\mathcal{SL}}_m(e)={\mathcal{SL}}_m(e,e)$. 
\end{defn}

\begin{Ex}
   Note that when $m=2$, $\mathcal{SL}_2$  coincides with the cogroupoid $\mathscr B(E,F)$ considered in \cite{Bichon2003}. While $\mathcal{GL}_2$ is connected by Corollary \ref{connected}, a straightforward calculation shows that for $e$ and $f$ as in Example \ref{ex-grobner}, where $l=2$ and $k=3$, we have $\mathcal{SL}_2(e,f)=0,$ and so $\mathcal{SL}_2$ is not connected.
\end{Ex}
 Let $V$ be a finite-dimensional $\kk$-vector space with a fixed basis $\{v_1,\ldots,v_n\}$ and $e: V^{\otimes m} \to \kk$  be an $m$-preregular form with associated matrix $\mathbb P\in \GL_n(\kk)$ subject to \eqref{eq:preregular}.
We point out that our $\mathcal {SL}_m(e)$ is the universal Hopf algebra $\mathcal H(e)$ described in \cite[\S 5]{BDV13}. Recall that $\mathcal{SL}_m(e)$ is generated over $\kk$ by $\mathbb A=(a_{ij})_{1\leq i,j\leq n}$ and $\mathbb B=(b_{ij})_{1\leq i,j\leq n}$ subject to relations:
\begin{equation}
\label{eq:SLalg}
\left. \begin{aligned}
    \sum_{1\leq i_1,\ldots,i_m\leq n}{e}_{i_1 \cdots i_m}a_{i_1j_1}\cdots a_{i_mj_m} &= {e}_{j_1\cdots j_m}, &\textnormal{ for any } 1\leq j_1,\dots, j_m\leq n,\\
    \sum_{1\leq i_1,\ldots,i_m\leq n}{e}_{i_1\cdots i_m}b_{i_mj_m}\cdots b_{i_1j_1} &= {e}_{j_1\cdots j_m},  &\textnormal{ for any } 1\leq j_1,\dots, j_m\leq n,\\
    \mathbb{A}\mathbb{B} &= \mathbb{I}_{n\times n},
\end{aligned}\right\}
\end{equation}
with Hopf algebra structure
\begin{gather*}
   \Delta(a_{ij})=\sum_{1\leq k\leq n} a_{ik}\otimes a_{kj},\quad  \Delta(b_{ji})=\sum_{1\leq k\leq n} b_{ki}\otimes b_{jk},\notag\\
   \varepsilon(a_{ij})=\delta(b_{ji})=\delta_{ij}\quad \text{for}\ 1\leq i,j\leq n,\\
   S(\mathbb A)=\mathbb B,\quad S(\mathbb B)=\mathbb P^{-1}\mathbb A\mathbb P\notag.
\end{gather*}
In particular, we know that $\mathcal{SL}_m(e)$ is copivotal with character $\Phi_e: \mathcal{SL}_m(e)\to \kk$ such that $\Phi_e(\mathbb A)=\mathbb P$ and $\Phi_e(B)=\mathbb P^{-1}$. We view $V$ as a right $\mathcal{SL}_m(e)$-comodule via $\rho: V\to V\otimes \mathcal{SL}_m(e)$ with $\rho(v_i)=\sum_{1\leq j\leq n} v_j\otimes a_{ji}$. Thus the $\mathcal{SL}_m(e)$-comodule map $e: V^{\otimes m}\to \kk$ has the following universal property:

\begin{lemma}
\label{lemma:univ-sl}\cite[Theorem 5]{BDV13}
Let $H$ be any Hopf algebra that right coacts on $V$ via $\rho_H: V\to V\otimes H$ such that $e: V^{\otimes m}\to \kk$ is $H$-colinear.  Then there is a unique Hopf algebra map $\theta: \mathcal{SL}_m(e)\to H$ such that the following diagram commutes: 
\[
\xymatrix{
V\ar[dr]_-{\rho_H}\ar[r]^-{\rho}& V\otimes {\mathcal{SL}}_m(e)\ar[d]^-{\id \otimes \theta}\\
& V\otimes H.
}
\]
\end{lemma}

\begin{lemma}
The cogroupoid $\mathcal{SL}_m$ is copivotal, with characters $\Phi_e: \mathcal{SL}_m(e)\to \kk$ given by $\Phi_e(\mathbb A)=\mathbb P$ and $\Phi_e(\mathbb B)=\mathbb P^{-1}$, that is, 
\[
S_{f,e}\circ S_{e,f}=\Phi_e^{-1}* \id *\Phi_f
\]
in $\mathcal{SL}_m(e,f)$, for any two $m$-preregular forms $e: V^{\otimes m}\to \kk$ and $f: W^{\otimes m}\to \kk$ with associated invertible matrices $\mathbb{P}$ and $\mathbb{Q}$, respectively.  
\end{lemma}

\begin{proof} 
It is straightforward to show that $\mathcal{SL}_m$ is a  well-defined cogroupoid. Next, we verify that $\Phi_e$ is a well-defined character on $\mathcal{SL}_m(e)=\mathcal{SL}_m(e,e)$. Suppose $V$ is of dimension $k$. By \eqref{eq:alg}, in $\mathcal{SL}_m(e)$ we have
\begin{align*}
    \Phi_e\left(\sum_{1\leq i_1,\ldots,i_m\leq k}{e}_{i_1 \cdots i_m}a_{i_1j_1}\cdots a_{i_mj_m}\right)&= \sum_{1\leq i_1,\ldots,i_m\leq k}{e}_{i_1 \cdots i_m}\mathbb P_{i_1j_1}\cdots \mathbb P_{i_mj_m}=e_{j_1\cdots j_m},\\
    \Phi_e\left(\sum_{1\leq i_1,\ldots,i_m\leq k}{e}_{i_1\cdots i_m}b_{i_mj_m}\cdots b_{i_1j_1} \right)&=\sum_{1\leq i_1,\ldots,i_m\leq k}{e}_{i_1\cdots i_m}\mathbb P^{-1}_{i_mj_m}\cdots \mathbb P^{-1}_{i_1j_1}={e}_{j_1\cdots j_m},\\
    \Phi_e(\mathbb{A}\mathbb{B}) &=\mathbb P\mathbb P^{-1}=\mathbb{I}_{k\times k}.
\end{align*}
Finally, by \Cref{lem:antipode}, in $\mathcal{SL}_m(e,f)$ we have
\begin{align*}
    S_{f,e}\circ S_{e,f}(\mathbb A^{e,f})&=\mathbb P^{-1}\mathbb A^{e,f}\mathbb Q=(\Phi_e^{-1}\otimes \id \otimes \Phi_f)(\id \otimes \Delta^{f}_{e,f})(\Delta^{e}_{e,f}\otimes \id)(\mathbb A^{e,f})\\
    &=(\Phi_e^{-1}*\id*\Phi_f)(\mathbb A^{e,f}),\\
        S_{f,e}\circ S_{e,f}(\mathbb B^{e,f})&=\mathbb P^{-1}\mathbb B^{e,f}\mathbb Q=(\Phi_e^{-1}\otimes \id\otimes \Phi_f)(\id \otimes \Delta^{f}_{e,f})(\Delta^{e}_{e,f}\otimes \id)(\mathbb B^{e,f})\\
        &=(\Phi_e^{-1}*\id*\Phi_f)(\mathbb B^{e,f}). 
\end{align*}
It follows that the cogroupoid $\mathcal{SL}_m$ is copivotal.
\end{proof}

\begin{lemma}
\label{prop:SLpreregular}
Let $e$ be an $m$-preregular form on some $n$-dimensional $\kk$-vector space $V$. In the pivotal tensor category ${\rm comod}_{\rm fd}(\mathcal{SL}_m(e))$, the $\mathcal{SL}_m(e)$-comodule map $e: V^{\otimes m}\to \kk$ is preregular.  
\end{lemma}

\begin{proof}
We show that $e$ satisfies both conditions in \Cref{defn:preop} to be preregular.

For (1): we define the $H$-comodule map $\pi: V^{\otimes (m-1)}\to \!^*V$ to be
\[
\pi:=\left(V^{\otimes (m-1)}\xrightarrow{{\rm coev}\otimes \id} (\!^*V\otimes V)\otimes V^{\otimes (m-1)}\xrightarrow{\cong} \!^*V\otimes (V^{\otimes m})\xrightarrow{\id \otimes e} \!^*V\right)
\]
or $\pi(v_{i_1}\otimes \cdots \otimes \cdots v_{i_{m-1}})(v_i)=e(v_1\otimes v_{i_1}\otimes \cdots \otimes v_{i_{m-1}})$. By \Cref{defn:preregular}(1),
one sees that $\pi$ is surjective. Set $W={\rm Ker}(\pi)$, which is a $H$-subcomodule of $V^{\otimes (m-1)}$. Thus $e$ factors through $V\otimes (V^{\otimes (m-1)}/W)$ via $\overline{e}: V\otimes (V^{\otimes (m-1)}/W)\to \kk$. Note that $\pi$ induces an isomorphism of $H$-comodules $\overline{\pi}: V^{\otimes (m-1)}/W\xrightarrow{\sim} V^*$. Set $\widetilde{e}=\overline{e}\circ (\id_V\otimes 
\overline{\pi}^{-1})$. Then we obtain a commutative diagram:
    \[
    \xymatrix{
    V^{\otimes m}\ar[r]^-{e}\ar@{->>}[d] & \kk\\
    V\otimes (V^{\otimes (m-1)}/W)\ar@{->>}[r]^-{\id_V\otimes \overline{\pi}}\ar[ur]^-{\overline{e}} & V\otimes \!^* V\ar[u]_-{\widetilde{e}}.
    }
    \]
It is clear that the $H$-colinear map $\widetilde{e}: V\otimes \!^*V\to \kk$ is nondegenerate. 

For (2): let $\mathbb{P}$ be the invertible matrix associated to $e$. By definition, we have 
\begin{align*}
    D_V^m(e) &(v_{i_1} \otimes \cdots \otimes v_{i_m})\\
    &=({\rm ev})\circ (\varphi_V\otimes \id)\circ (\id\otimes e\otimes \id)\circ ({\rm coev}\otimes \id)(v_{i_1}\otimes \cdots \otimes v_{i_m})\\
    &=({\rm ev})\circ (\varphi_V\otimes \id)\circ (\id\otimes e\otimes \id)\left(\sum_{1\leq i\leq n} v^i\otimes v_i\otimes v_{i_1}\otimes \cdots \otimes v_{i_m}\right)\\
     &= ({\rm ev})\circ (\varphi_V\otimes \id) \left(\sum_{1\leq i\leq n} e(v_i\otimes v_{i_1}\otimes \cdots v_{i_{m-1}}) v^i\otimes  v_{i_m}\right)\\
     \overset{\eqref{eq:cosovereign}}&{=} ({\rm ev}) \left(\sum_{1\leq i,j\leq n} e(v_i\otimes v_{i_1}\otimes \cdots v_{i_{m-1}}) \Phi_e(h_{ij})v^j\otimes  v_{i_m}\right)\\
     &=\sum_{1\leq i,j\leq n} e(v_i\otimes v_{i_1}\otimes \cdots v_{i_{m-1}}) \Phi_e(h_{ij})\delta_{ji_m}\\
    &=\sum_{1\leq i\leq n} e_{ii_1\cdots i_{m-1}}\mathbb P_{ii_m}\\
     \overset{\eqref{eq:preregular}}&{=} e_{i_1\cdots i_m}\\
    &= e(v_{i_1}\otimes \cdots \otimes v_{i_m}),
\end{align*}
for any $1\leq i_1,\ldots,i_m\leq n$. Hence, $e: V^{\otimes m}\to \kk$ is preregular.
\end{proof}

We point out from the above proof that if an $H$-comodule map $e: V^{\otimes m}\to \kk$ is preregular in ${\rm comod}_{\rm fd}(H)$, then it is an $m$-preregular form on $V$ with matrix $\mathbb P\in \GL_n(\kk)$ given by $\varphi_V: \!^*V\to V^*: v^i\mapsto \sum_{1\leq j\leq n}\mathbb P_{ij}v^j$, where $\{v^1,\ldots,v^n\}$ is the dual basis of $\{v_1,\ldots,v_n\}$ of $V$.

Let $H$ be an arbitrary Hopf algebra and $\sigma$ be a left 2-cocycle on $H$. We denote by 
\[F: {\rm comod}_{\rm fd}(H)\xrightarrow{\sim} {\rm comod}_{\rm fd}(H^{\sigma})\] the monoidal equivalence between the categories of finite-dimensional right comodules over $H$ and $H^\sigma$, respectively. Suppose ${\rm comod}_{\rm fd}(H)$ has a pivotal structure $\varphi: \!^*(-)\xrightarrow{\sim} (-)^*$. Thus $\varphi$ induces a pivotal structure $\varphi_\sigma$ on ${\rm comod}_{\rm fd}(H^\sigma)$ via the monoidal equivalence $F$, which is uniquely determined by the commutative diagram 
\begin{align}\label{eq:sovereign}
\xymatrix{
\!^*F(V)\ar[d]_-{(\varphi_\sigma)_{F(V)}}\ar[r]^-{r_V} & F(\!^*V)\ar[d]^-{F(\varphi_V)}\\
F(V)^*\ar[r]^-{l_V} & F(V^*)
}
\end{align}
for any finite-dimensional right $H$-comodule $V$. Here $r: \!^*F(-)\xrightarrow{\sim} F(\!^*(-))$ and $l: F(-)^*\xrightarrow{\sim} F((-)^*)$ are isomorphisms of monoidal functors described in \cite[Eq. (2.8.4)]{Bichon2001}. 

The following definition introduces the notion of twisting of a preregular form via the monoidal equivalence between comodule categories. 

\begin{defn}
\label{DefPreregularForm}
Let $m\ge 2$ be an integer and $V$ be a finite-dimensional right comodule over a Hopf algebra $H$.  For any $H$-comodule map $f: V^{\otimes m} \to \kk$, we define an $H^\sigma$-comodule map $f_\sigma: F(V)^{\otimes_\sigma m}\to \kk$ via 
\begin{align}\label{eq:twistform}
    f_\sigma:=\left(F(V)^{\otimes_\sigma m}\xrightarrow{\xi^{-1}_{V^{\otimes m }}} F(V^{\otimes m})\xrightarrow{F(f)} F(\kk)=\kk\right).
\end{align}
\end{defn}

\begin{proposition}
\label{prop:preregular}
Denote by $\mathcal C={\rm comod}_{\rm fd}(H)$ and $\mathcal D={\rm comod}_{\rm fd}(H^{\sigma})$ together with pivotal $\varphi_\sigma$ satisfying 
\eqref{eq:sovereign}. If $f: V^{\otimes m}\to \kk$ is preregular in $\mathcal C$, then $f_\sigma: F(V)^{\otimes_\sigma m}\to \kk$ is preregular in $\mathcal D$. 
\end{proposition}

\begin{proof} 
To show that $f_\sigma$ is nondegenerate, we consider the commutative diagram
\[
\xymatrix{
F(V)^{\otimes_\sigma m}\ar[rrrr]^-{f_\sigma}\ar[dddd]_-{\cong }\ar[dr]^-{\xi^{-1}_{V^{\otimes m}}} &   &    & & \kk\\
 & F(V^{\otimes m})\ar[rr]^-{F(f)}\ar@{->>}[d] && \kk\ar[ur]^-{=}    &\\
 & F(V\otimes V^{\otimes (m-1)})\ar@{->>}[rr]^-{F(\id_V\otimes \pi)}\ar[d]_-{\xi_{V,V^{\otimes (m-1)}}}&&    F(V\otimes \!^*V)\ar[u]_-{F(\widetilde{f})} &\\
  & F(V)\otimes_\sigma F(V^{\otimes(m-1)})\ar@{->>}[rr]^-{\id_{F(V)}\otimes F(\pi)}\ar[dl]_-{\id_{F(V)}\otimes \xi_{V^{\otimes m}}} &&    F(V)\otimes_\sigma F(\!^*V)\ar[u]_-{\xi^{-1}_{V,\!^*V}} &\\
F(V)\otimes_\sigma F(V)^{\otimes_\sigma (m-1)}\ar[rrrr]^-{\id_{F(V)}\otimes (l_V^{-1}\circ F(\pi)\circ \xi^{-1}_{V^{\otimes (m-1)}})}&  &    & & F(V) \otimes_\sigma \!^*F(V)\ar[ul]_-{r_V}\ar[uuuu]_-{\widetilde{f_\sigma}}
}
\]
where $\widetilde{f_\sigma}:=F(\widetilde{f})\circ \xi^{-1}_{V,\!^*V}\circ r_V$. It is clear that \[r_V^{-1}\circ F(\pi)\circ \xi^{-1}_{V^{\otimes (m-1)}}: F(V)^{\otimes_\sigma (m-1)}\to \!^*F(V)\] is surjective. Since $\widetilde{f}: V\otimes \!^*V\to \kk$ is a nondegenerate pairing in $\mathcal C$, we have a morphism $g: \kk\to \!^*V\otimes V$ such that 
\[
(\widetilde{f}\otimes \id_V)\circ (\id_V\otimes g)=\id_V\quad \text{and}\quad (\id_{\, \!^*V} \otimes \widetilde{f})\circ(g\otimes \id_{\, \!^*V})=\id_{\, \!^*V}.
\]
By \cite[Lemma 1.5]{TV}, we know $\widetilde{f_\sigma}$ is again a nondegenerate pairing in $\mathcal D$ together with morphism 
\[
\kk\xrightarrow{F(g)} F(\!^*V\otimes V)\xrightarrow{\xi_{\!^*V,V}} F(\!^*V)\otimes_\sigma F(V)\xrightarrow{r_V^{-1}\otimes {\id}_{F(V)}} \!^*F(V)\otimes_\sigma F(V).
\]
Moreover, we have $D_{F(V)}^m(f_\sigma)=f_\sigma$ from \Cref{lem:conjugate} since $D_V^m(f)=f$. This proves our result.
\end{proof}

We now state our main result in this section to describe how a preregular form and its associated cogroupoid behave under 2-cocycle twists.

\begin{thm}\label{thm:SLE}
Let $m\ge 2$ be an integer and $V$ be a finite-dimensional $\kk$-vector space. Let $e$ be an $m$-preregular form on $V$ and $\sigma$ be a left 2-cocycle on $\mathcal{SL}_m(e)$. Then $e_\sigma$ is also a $m$-preregular form on $V$ and  
\[
\mathcal{SL}_m(e_\sigma)~\cong~\mathcal{SL}_m(e)^\sigma\] 
as Hopf algebras. 
\end{thm}

In order to prove the above theorem, we first state some results, without proof, for 2-cocycle twists. Parts (3) and (4) appear in the dual context of Drinfeld twists in \cite[Lemma 2.7]{FKMW-Semisimple-reflection-Hopf-algebras-of-dim-16}.

\begin{lemma}
\label{rem:2cocycle}
Let $H$ and $K$ be two arbitrary Hopf algebras with a Hopf algebra map $\alpha: H\to K$. Let $\sigma$ be a left 2-cocycle on $K$ and $B$ be a right $K$-comodule algebra via $\nu: B\to B\otimes K$. 
\begin{enumerate}
    \item We have a left 2-cocycle on $H$, which is denoted by
    \[\alpha^*\sigma:=\left( H\otimes H\xrightarrow{\alpha\otimes \alpha}K\otimes K\xrightarrow{\sigma} \kk\right)
    \]
    with convolution inverse $\alpha^*\sigma^{-1}$. 
    \item The Hopf algebra map $\alpha: H\to K$ induces a Hopf algebra map $\alpha^\sigma:H^{\alpha^*\sigma}\to  K^\sigma$ for the 2-cocycle twists of the corresponding Hopf algebras, where $\alpha=\alpha^{\sigma}$ as linear maps.  
    \item $\sigma^{-1}$ is a left 2-cocycle on $K^\sigma$ with $(K^\sigma)^{\sigma^{-1}}=K$. Similarly, $\alpha^*\sigma^{-1}$ is a left 2-cocycle on $H^{\alpha^*\sigma}$ with $(H^{\alpha^*\sigma})^{\alpha^*\sigma^{-1}}=H$.
    In particular, we have $(\alpha^{\sigma})^{\sigma^{-1}}=\alpha: H\to K$.
    \item $B_{\sigma^{-1}}$ is a right $K^\sigma$-comodule algebra via $\nu^{\sigma}: B_{\sigma^{-1}}\to B_{\sigma^{-1}}\otimes K^{\sigma}$, where $\nu^{\sigma}=\nu$ as linear maps. In particular, $(B_{\sigma^{-1}})_\sigma=B$ and $(\nu^{\sigma})^{\sigma^{-1}}=\nu$. 
    \item Suppose $B$ is also a right $H$-comodule algebra via $\tau: B\to B\otimes H$ such that the 
    diagram \[
    \xymatrix{
    &B\ar[dl]_-{\tau}\ar[dr]^-{\nu}  &\\
   B\otimes H\ar[rr]^-{\id \otimes \alpha}&& B\otimes K
    }
    \]
    commutes. Then $B_{\sigma^{-1}}$ is also a right $H^{\alpha^*\sigma}$-comodule algebra via $\tau^{\sigma}: B_{\sigma^{-1}}\to B_{\sigma^{-1}}\otimes H^{\alpha^*\sigma}$ satisfying $\nu^{\sigma}=(\id\otimes \alpha^{\sigma})\, \circ \, \tau^{\sigma}$, where $\tau^{\sigma}=\tau$ as linear maps. 
    \end{enumerate}
\end{lemma}

\begin{proof}[Proof of Theorem \ref{thm:SLE}] 
By \Cref{prop:SLpreregular} and \Cref{prop:preregular} with $H= \mathcal{SL}_m(e)$, we know $e$ and $e_\sigma$ are preregular in ${\rm comod}_{\rm fd}(H)$ and in ${\rm comod}_{\rm fd}(H^{\sigma})$, respectively. Then $e_\sigma$ is an $m$-preregular form on $F(V)=V$. Recalling \Cref{lemma:univ-sl}, we write the universal right Hopf coactions of $\mathcal{SL}_m(e)$ and $\mathcal{SL}_m(e_\sigma)$ on $V$ and $F(V)$ as
\[
\rho: V\to V\otimes \mathcal{SL}_m(e) \quad \text{and} \quad \tau: F(V)\to F(V)\otimes \mathcal{SL}_m(e_\sigma),
\]
such that $e: V^{\otimes m}\to \kk$ and $e_\sigma: F(V)^{\otimes_\sigma m}\to \kk$ are colinear with respect to the corresponding coactions. Since $\mathcal{SL}(e)^\sigma$ left coacts on $F(V)$ via $\rho^\sigma: F(V)\to F(V)\otimes_\sigma \mathcal{SL}_m(e)^\sigma$ and $e_\sigma$ is $\mathcal{SL}_m(e)^\sigma$-colinear, we have a unique Hopf algebra map $h: \mathcal{SL}_m(e_\sigma)\to \mathcal{SL}_m(e)^\sigma$ such that 
\[
(\id_{F(V)}\otimes h)\circ \tau=\rho^\sigma.
\]

It follows from \Cref{rem:2cocycle} that $\sigma^{-1}$ is a left 2-cocycle on $\mathcal{SL}_m(e)^\sigma$ which induces a left 2-cocycle $h^*\sigma^{-1}$ on $\mathcal{SL}_m(e_\sigma)$. Furthermore, one checks that $\mathcal{SL}_m(e_\sigma)^{h^*\sigma^{-1}}$ left coacts on $V$ via $\tau^{h^*\sigma^{-1}}: V\to V\otimes  \mathcal{SL}_m(e_\sigma^{-1})^{h^*\sigma^{-1}}$ while preserving the map $e: V^{\otimes m}\to \kk$. So we get another unique Hopf algebra map $g: \mathcal{SL}_m(e)\to \mathcal{SL}_m(e_\sigma)^{h^*\sigma^{-1}}$ such that
\[
(\id_{V}\otimes g)\circ \rho=\tau^{h^*\sigma^{-1}}.
\]
We denote by $h^{\sigma^{-1}}: \mathcal{SL}_m(e_\sigma)^{h^*\sigma^{-1}}\to \mathcal{SL}_m(e)$ the Hopf algebra map induced by $h: \mathcal{SL}_m(e_\sigma)\to \mathcal{SL}_m(e)^\sigma$. Thus, the commutative diagram \[
\xymatrix{
&V\ar[dl]_-{\rho}\ar[dr]^-{\rho}\ar[d]^-{\tau^{h^*\sigma^{-1}}}&\\
V\otimes \mathcal{SL}_m(e)\ar[r]_-{{\id}\otimes g} & V\otimes \mathcal{SL}_m(e_\sigma)^{h^*\sigma^{-1}}\ar[r]_-{\id\otimes h^{\sigma^{-1}}} & V\otimes \mathcal{SL}_m(e)
}
\]
implies that
\[
h^{\sigma^{-1}}\circ g~=~\id\ \text{on}\ \mathcal{SL}_m(e).
\]
Similarly, we have $(h^{\sigma^{-1}})^*\sigma=h^*\sigma$ is a left 2-cocycle on $\mathcal{SL}_m(e_\sigma)^{h^*\sigma^{-1}}$. Note that $g^*(h^{\sigma^{-1}})^*\sigma=(h^{\sigma^{-1}}\circ g)^*\sigma=\sigma$ on $\mathcal{SL}_m(e)$. Therefore, we have a Hopf algebra map
\[
g^{(h^{\sigma^{-1}})^*\sigma}:\mathcal{SL}_m(e)^\sigma\to \mathcal{SL}_m(e_\sigma),
\]
making the diagram
\[
\xymatrix{
&V\ar[dl]_-{\tau}\ar[dr]^-{\tau}\ar[d]^-{\rho^\sigma}&\\
F(V)\otimes \mathcal{SL}_m(e_\sigma)\ar[r]_-{{\id}\otimes h} & V\otimes \mathcal{SL}_m(e)^\sigma\ar[r]_-{\id\otimes g^{(h^{\sigma^{-1}})^*\sigma}} & F(V)\otimes \mathcal{SL}_m(e_\sigma)
}
\]
commute. This implies that
\[
g^{(h^{\sigma^{-1}})^*\sigma}\circ h=\id\ \text{on}\ \mathcal{SL}_m(e_\sigma).
\]
Finally, it is routine to check that $h$ and $g^{(h^{\sigma^{-1}})^*\sigma}$ are inverse to each other.
\end{proof}

\bibliography{biblNov2021}

\providecommand{\bysame}{\leavevmode\hbox to3em{\hrulefill}\thinspace}
\providecommand{\MR}{\relax\ifhmode\unskip\space\fi MR }
\providecommand{\MRhref}[2]{%
  \href{http://www.ams.org/mathscinet-getitem?mr=#1}{#2}
}
\providecommand{\href}[2]{#2}
\begin{thebibliography}{10}

\bibitem{AGV}
A.~L. Agore, A.~S. Gordienko, and J.~Vercruysse, \emph{${V}$-universal {H}opf
  algebras (co)acting on ${\Omega}$-algebras}, Communications in Contemporary
  Mathematics (2021).

\bibitem{ATV1991}
M.~Artin, J.~Tate, and M.~Van~den Bergh, \emph{Modules over regular algebras of
  dimension {$3$}}, Invent. Math. \textbf{{\bf 106}} (1991), no.~2, 335--388.
  \MR{1128218}

\bibitem{Bichon2001}
J.~Bichon, \emph{{Cosovereign Hopf algebras}}, {J. Pure Appl. Algebra}
  \textbf{{\bf 157}} (2001), no.~2-3, 121--133 (English).

\bibitem{Bichon2003}
\bysame, \emph{The representation category of the quantum group of a
  non-degenerate bilinear form}, Comm. Algebra \textbf{{\bf 31}} (2003),
  no.~10, 4831--4851. \MR{1998031}

\bibitem{Bichon2013}
\bysame, \emph{Hochschild homology of {H}opf algebras and free
  {Y}etter-{D}rinfeld resolutions of the counit}, Compos. Math. \textbf{149}
  (2013), no.~4, 658--678. \MR{3049699}

\bibitem{Bichon2014}
\bysame, \emph{Hopf-{G}alois objects and cogroupoids}, Rev. Un. Mat. Argentina
  \textbf{{\bf 55}} (2014), no.~2, 11--69. \MR{3285340}

\bibitem{BDV13}
J.~Bichon and M.~Dubois-Violette, \emph{The quantum group of a preregular
  multilinear form}, Lett. Math. Phys. \textbf{{\bf 103}} (2013), no.~4,
  455--468. \MR{3029329}

\bibitem{Bichon-Neshveyev-Yamashita2016}
J.~Bichon, S.~Neshveyev, and M.~Yamashita, \emph{Graded twisting of categories
  and quantum groups by group actions}, Ann. Inst. Fourier (Grenoble)
  \textbf{{\bf 66}} (2016), no.~6, 2299--2338. \MR{3580173}

\bibitem{BSW}
R.~Bocklandt, T.~Schedler, and M.~Wemyss, \emph{Superpotentials and higher
  order derivations}, J. Pure Appl. Algebra \textbf{214} (2010), no.~9,
  1501--1522. \MR{2593679}

\bibitem{BG2002}
K.~A. Brown and K.~R. Goodearl, \emph{Lectures on algebraic quantum groups},
  Advanced Courses in Mathematics. CRM Barcelona, Birkh\"{a}user Verlag, Basel,
  2002. \MR{1898492}

\bibitem{Chirvasitu-Walton-Wang2019}
A.~Chirvasitu, C.~Walton, and X.~Wang, \emph{On quantum groups associated to a
  pair of preregular forms}, J. Noncommut. Geom. \textbf{{\bf 13}} (2019),
  no.~1, 115--159. \MR{3941475}

\bibitem{Cohn1966}
P.~M. Cohn, \emph{Some remarks on the invariant basis property}, Topology
  \textbf{{\bf 5}} (1966), 215--228. \MR{197511}

\bibitem{Doi93}
Y.~Doi, \emph{Braided bialgebras and quadratic bialgebras}, Comm. Algebra
  \textbf{{\bf 21}} (1993), no.~5, 1731--1749. \MR{1213985}

\bibitem{DT94}
Y.~Doi and M.~Takeuchi, \emph{Multiplication alteration by two-cocycles---the
  quantum version}, Comm. Algebra \textbf{{\bf 22}} (1994), no.~14, 5715--5732.
  \MR{1298746}

\bibitem{Dubois-Violette2005}
M.~Dubois-Violette, \emph{Graded algebras and multilinear forms}, C. R. Math.
  Acad. Sci. Paris \textbf{{\bf 341}} (2005), no.~12, 719--724. \MR{2188865}

\bibitem{DVL1990}
M.~Dubois-Violette and G.~Launer, \emph{The quantum group of a nondegenerate
  bilinear form}, Phys. Lett. B \textbf{245} (1990), no.~2, 175--177.
  \MR{1068703}

\bibitem{EGNO}
P.~Etingof, S.~Gelaki, D.~Nikshych, and V.~Ostrik, \emph{Tensor categories},
  Mathematical Surveys and Monographs, vol.~{\bf 205}, American Mathematical
  Society, Providence, RI, 2015. \MR{3242743}

\bibitem{FKMW-Semisimple-reflection-Hopf-algebras-of-dim-16}
L.~Ferraro, E.~Kirkman, W.~F. Moore, and R.~Won, \emph{Semisimple reflection
  {H}opf algebras of dimension sixteen}, Algebr. Represent. Theory \textbf{{\bf
  25}} (2022), no.~3, 615--647. \MR{4421812}

\bibitem{FY}
P.~Freyd and D.~N. Yetter, \emph{Coherence theorems via knot theory}, J. Pure
  Appl. Algebra \textbf{{\bf 78}} (1992), no.~1, 49--76. \MR{1154897}

\bibitem{Hennings1996}
M.~Hennings, \emph{Invariants of links and {$3$}-manifolds obtained from {H}opf
  algebras}, J. London Math. Soc. (2) \textbf{{\bf 54}} (1996), no.~3,
  594--624. \MR{1413901}

\bibitem{HNUVVW21}
H.~Huang, V.~C. Nguyen, C.~Ure, K.~B. Vashaw, P.~Veerapen, and X.~Wang,
  \emph{Twisting of graded quantum groups and solutions to the quantum
  {Y}ang-{B}axter equation}, Transform. Groups (2022), DOI:
  10.1007/s00031-022-09779-9.

\bibitem{KasGTM}
C.~Kassel, \emph{Quantum groups}, Graduate Texts in Mathematics, vol. 155,
  Springer-Verlag, New York, 1995. \MR{1321145}

\bibitem{KR1995}
L.~H. Kauffman and D.~E. Radford, \emph{Invariants of {$3$}-manifolds derived
  from finite-dimensional {H}opf algebras}, J. Knot Theory Ramifications
  \textbf{{\bf 4}} (1995), no.~1, 131--162. \MR{1321293}

\bibitem{Leavitt1957}
W.~G. Leavitt, \emph{Modules without invariant basis number}, Proc. Amer. Math.
  Soc. \textbf{{\bf 8}} (1957), 322--328. \MR{83986}

\bibitem{MacLane63}
S.~Mac~Lane, \emph{Natural associativity and commutativity}, Rice Univ. Stud.
  \textbf{49} (1963), no.~4, 28--46. \MR{170925}

\bibitem{Manin2018}
Y.~I. Manin, \emph{Quantum groups and noncommutative geometry}, second ed., CRM
  Short Courses, Centre de Recherches Math\'{e}matiques, [Montreal], QC;
  Springer, Cham, 2018, With a contribution by Theo Raedschelders and Michel
  Van den Bergh. \MR{3839605}

\bibitem{MSKoszul}
I.~Mori and S.~P. Smith, \emph{{$m$}-{K}oszul {A}rtin-{S}chelter regular
  algebras}, J. Algebra \textbf{446} (2016), 373--399. \MR{3421098}

\bibitem{Mori-Ueyama2019}
I.~Mori and K.~Ueyama, \emph{The classification of 3-dimensional noetherian
  cubic {C}alabi-{Y}au algebras}, J. Pure Appl. Algebra \textbf{223} (2019),
  no.~5, 1946--1965. \MR{3906535}

\bibitem{Ng-Schauenburg}
S.-H. Ng and P.~Schauenburg, \emph{Higher {F}robenius-{S}chur indicators for
  pivotal categories}, Hopf algebras and generalizations, Contemp. Math.,
  vol.~{\bf 441}, Amer. Math. Soc., Providence, RI, 2007, pp.~63--90.
  \MR{2381536}

\bibitem{NgSchauenburg2010}
\bysame, \emph{Congruence subgroups and generalized {F}robenius-{S}chur
  indicators}, Comm. Math. Phys. \textbf{300} (2010), no.~1, 1--46.
  \MR{2725181}

\bibitem{vdb2017}
T.~Raedschelders and M.~Van~den Bergh, \emph{The {M}anin {H}opf algebra of a
  {K}oszul {A}rtin-{S}chelter regular algebra is quasi-hereditary}, Adv. Math.
  \textbf{{\bf 305}} (2017), 601--660. \MR{3570144}

\bibitem{RVdB2017}
\bysame, \emph{The representation theory of non-commutative {$\mathcal{O}({\rm
  GL}_2)$}}, J. Noncommut. Geom. \textbf{11} (2017), no.~3, 845--885.
  \MR{3713007}

\bibitem{Ro2016}
D.~Rogalski, \emph{Noncommutative projective geometry}, Noncommutative
  algebraic geometry, Math. Sci. Res. Inst. Publ., vol.~{\bf 64}, Cambridge
  Univ. Press, New York, 2016, pp.~13--70. \MR{3618472}

\bibitem{Sch1996}
P.~Schauenburg, \emph{Hopf bi-{G}alois extensions}, Comm. Algebra \textbf{{\bf
  24}} (1996), no.~12, 3797--3825. \MR{1408508}

\bibitem{Schau01}
\bysame, \emph{Turning monoidal categories into strict ones}, New York J. Math.
  \textbf{7} (2001), 257--265. \MR{1870871}

\bibitem{TV}
V.~Turaev and A.~Virelizier, \emph{Monoidal categories and topological field
  theory}, Progress in Mathematics, vol.~{\bf 322}, Birkh\"{a}user/Springer,
  Cham, 2017. \MR{3674995}

\bibitem{WaltonWang2016}
C.~Walton and X.~Wang, \emph{On quantum groups associated to non-{N}oetherian
  regular algebras of dimension 2}, Math. Z. \textbf{{\bf 284}} (2016),
  no.~1-2, 543--574. \MR{3545505}

\bibitem{Wang1995}
S.~Wang, \emph{Free products of compact quantum groups}, Comm. Math. Phys.
  \textbf{167} (1995), no.~3, 671--692. \MR{1316765}

\bibitem{Y}
D.~N. Yetter, \emph{Framed tangles and a theorem of {D}eligne on braided
  deformations of {T}annakian categories}, Deformation theory and quantum
  groups with applications to mathematical physics ({A}mherst, {MA}, 1990),
  Contemp. Math., vol.~{\bf 134}, Amer. Math. Soc., Providence, RI, 1992,
  pp.~325--349. \MR{1187296}

\bibitem{Zhang1996}
J.~J. Zhang, \emph{Twisted graded algebras and equivalences of graded
  categories}, Proc. London Math. Soc. (3) \textbf{{\bf 72}} (1996), no.~2,
  281--311. \MR{1367080}

\bibitem{Zhang1998}
\bysame, \emph{Non-{N}oetherian regular rings of dimension {$2$}}, Proc. Amer.
  Math. Soc. \textbf{{\bf 126}} (1998), no.~6, 1645--1653. \MR{1459158}

\end{thebibliography}
\bibliographystyle{amsplain}
\end{document}